
\documentclass[12pt]{amsart}

\voffset=-1.4mm
\oddsidemargin=17pt \evensidemargin=17pt
\headheight=9pt     \topmargin=26pt
\textheight=576pt   \textwidth=440.8pt
\parskip=0pt plus 4pt

\usepackage{amssymb}
\usepackage{bm}
\usepackage{graphicx}
\usepackage{psfrag}
\usepackage{color}
\usepackage{url}
\usepackage{enumerate}

\newcommand{\excise}[1]{}

\makeatletter
\newtheorem*{rep@theorem}{\rep@title}
\newcommand{\newreptheorem}[2]{%
\newenvironment{rep#1}[1]{%
 \def\rep@title{#2 \ref{##1}}%
 \begin{rep@theorem}}%
 {\end{rep@theorem}}}
\makeatother

\newtheorem{thm}{Theorem}[section]
\newtheorem{lemma}[thm]{Lemma}

\newtheorem{cor}[thm]{Corollary}
\newtheorem{prop}[thm]{Proposition}

\newtheorem{prob}[thm]{Problem}

\theoremstyle{definition}
\newtheorem{example}[thm]{Example}
\newtheorem{remark}[thm]{Remark}
\newtheorem{defn}[thm]{Definition}

\numberwithin{equation}{section}

\newreptheorem{cor}{Corollary}



\newcommand{\ring}[1]{\ensuremath{\mathbb{#1}}}

\renewcommand\>{\rangle}
\newcommand\<{\langle}

\newcommand\NN{\ring{N}}

\newcommand\RR{\ring{R}}

\newcommand\ZZ{\ring{Z}}

\newcommand\xx{{\mathbf x}}

\renewcommand\aa{{\mathbf a}}
\newcommand\bb{{\mathbf b}}





 %



\DeclareMathOperator\Betti{Betti} 




\begin{document}

\mbox{}
\title{On the set of catenary degrees of finitely generated cancellative commutative monoids\qquad}
\author{Christopher O'Neill}
\address{Mathematics Department\\Texas A\&M University\\College Station, TX 77843}
\email{coneill@math.tamu.edu}
\author{Vadim Ponomarenko}
\address{Mathematics Department\\San Diego State University\\San Diego, CA 92182}
\email{vponomarenko@mail.sdsu.edu}
\author{Reuben Tate}
\address{Mathematics Department\\University of Hawai`i at Hilo\\Hilo, HI 96720}
\email{reubent@hawaii.edu}
\author{Gautam Webb}
\address{Mathematics Department\\Colorado College\\Colorado Springs, CO 80903}
\email{gautam.webb@coloradocollege.edu}

\date{\today}

\begin{abstract}
\hspace{-2.05032pt}
The catenary degree of an element $n$ of a cancellative commutative monoid $S$ is a nonnegative integer measuring the distance between the irreducible factorizations of $n$.  The catenary degree of the monoid $S$, defined as the supremum over all catenary degrees occurring in $S$, has been studied as an invariant of nonunique factorization.  In this paper, we investigate the set $\mathsf C(S)$ of catenary degrees achieved by elements of $S$, focusing on the case where $S$ in finitely generated (where $\mathsf C(S)$ is known to be finite).  Answering an open question posed by Garc\'ia-S\'anchez, we provide a method to compute the smallest nonzero element of $\mathsf C(S)$ that parallels a well-known method of computing the maximum value.  We also give several examples demonstrating certain extremal behavior for $\mathsf C(S)$, and present some open questions for further study.  
\end{abstract}
\maketitle


\vspace{-1cm}

\section{Introduction}
\label{s:intro}

Nonunique factorization theory aims to classify and quantify the failure of elements of cancellative commutative monoids to factor uniquely into irreducibles \cite{nonuniq}.  Factorization invariants are arithmetic quantities that measure the failure of a monoid's elements to admit unique factorizations.  There are many standard invariants used in the literature to compare factorization behavior between monoids, such as the delta set \cite{delta}, elasticity \cite{elasticity}, and $\omega$-primality \cite{prime,quasi}.  

Most factorization invariants assign a value to each monoid element determined by its factorization structure.  In examining the behavior of an invariant throughout the whole monoid, one often considers quantities such as the supremum of all values attained at its elements, or the set of all such values.  For instance, the elasticity of a monoid element is defined as the ratio of its largest and smallest factorization lengths, and the elasticity of the monoid is simply the supremum of the elasticities of its elements.  

This paper concerns the catenary degree (Definition~\ref{d:catenarydegree}), a factorization invariant that has been the subject of much recent work \cite{semitheor,catenarytamenumerical,omidali}.  The catenary degree $\mathsf c(n)$ of a monoid element $n \in S$ is a nonnegative integer derived from combinatorial properties of the set of factorizations of $n$.   Although much of the literature on the catenary degree focuses on the maximum catenary degree attained within $S$, some recent papers \cite{catenaryperiodic,catenaryarith} examine the catenary degees of individual monoid elements.  In this paper, we investigate the set $\mathsf C(S)$ of catenary degrees occurring within $S$ as a factorization invariant, focusing on the setting where $S$ is finitely generated.  

The catenary degree $\mathsf c(n)$ of a monoid element $n \in S$ is closely related to the delta set $\Delta(n)$ (Definition~\ref{d:factorizations}), although neither completely determines the other.  The factorization invariants $\Delta(S) = \bigcup_{n \in S} \Delta(n)$ and $\mathsf c(S) = \max_{n \in S} \mathsf c(n)$, both of which are standard in the literature, describe the factorization structure of $S$ at drastically different levels of detail.  Indeed, one maintains a comprehensive set of values, while the other records only a single value.  The set $\mathsf C(S)$ of catenary degrees of $S$ more closely resembles the delta set $\Delta(S)$, as it provides a more comparably detailed description of the factorization structure of $S$.  The goal of this paper is to provide an initial examination of the set $\mathsf C(S)$, as well as to motivate further study.  

It is known from \cite{catenarytamefingen} that $\max \mathsf C(S)$, the maximum catenary degree attained within a given monoid $S$, always occurs at least once within a certain finite class of elements of $S$ called Betti elements (Definition~\ref{d:bettielement}).  This result was later extended to monoids satisfying a certain weakened Noetherian condition \cite{catenarymaxgeneral}.  The second author of \cite{catenarytamefingen} conjectured that when $S$ is finitely generated, the minimum nonzero value of $\mathsf C(S)$ also occurs at a Betti element of $S$.  

As the main result of this paper, we prove the aformetioned conjecture (Theorem~\ref{c:mincatdegree}).  This yields a computable bound on the values occurring in $\mathsf C(S)$, as well as a characterization of those finitely gererated monoids $S$ for which $|\mathsf C(S)|$ is minimal.  We also give in Example~\ref{e:catenaryset} a semigroup whose set of catenary degrees has a nonzero element that does not occur at a Betti element, demonstrating that this result need not extend to the entire set $\mathsf C(S)$.  

In Section~\ref{s:extremecatset}, we give several examples that demonstrate certain extremal behavior of $\mathsf C(S)$.  In particular, we show that if a monoid $S$ has at least $3$ minimal generators, then $|\mathsf C(S)|$ cannot be bounded by the number of generators of $S$ (Theorem~\ref{t:largecatset}).  

\subsection*{Acknowledgements}
The authors would like to thank Scott Chapman and Pedro Garc\'ia-S\'anchez for numerous helpful conversations, and Alfred Geroldinger for his assistance with Theorem~\ref{t:fullset}.

\section{Background}
\label{s:background}

Unless otherwise stated, $S$ denotes a finitely generated cancellative commutative monoid, written additively.  

\begin{remark}\label{r:mingenset}
By passing from $S$ to the quotient by its unit group (if necessary), we will assume that $S$ is \emph{reduced}, that is, $S$ has no nonzero units.  In this case, $S$ has a unique generating set that is minimal with respect to containment, namely the set of irreducible elements of $S$.  When we write $S = \<n_1, \ldots n_k\>$, it is assumed that the elements $n_1, \ldots, n_k$ are precisely the irreducible elements of $S$.  
\end{remark}

\begin{defn}\label{d:numerical}
An additive submonoid $S \subset \NN$ is a \emph{numerical monoid} if $|\NN \setminus S| < \infty$.  If we write $S = \<n_1, \ldots, n_k\>$, it is assumed that the integers $n_1 < \cdots < n_k$ comprise the unique minimal generating set of $S$.  
\end{defn}

\begin{defn}\label{d:factorizations}
Fix $n \in S$.  A \emph{factorization} of $n$ is an expression $n = u_1 + \cdots + u_r$ of $n$ as a sum of irreducible elements $u_1, \ldots, u_r$ of $S$.  If $S = \<n_1, \ldots, n_k\>$, we often write factorizations of $n$ in the form $n = a_1n_1 + \cdots + a_kn_k$ (see Remark~\ref{r:mingenset}).  Write 
$$\mathsf Z_S(n) = \{(a_1, \ldots, a_k) : n = a_1n_1 + \cdots + a_kn_k\} \subset \NN^k$$
for the \emph{set of factorizations} of $n \in S$.  Given $\aa \in \mathsf Z_S(n)$, we denote by $|\aa|$ the number of irreducibles in the factorization $\aa$, that is, $|\aa| = a_1 + \cdots + a_k$.  The set of factorization lengths of $n$, denoted $\mathsf L_S(n) = \{|\aa| : \aa \in \mathsf Z_S(n)\}$, is called the \emph{length set} of $n$.  Writing $\mathsf L(S) = \{\ell_1 < \cdots < \ell_r\}$, the \emph{delta set} of $n$ is the set $\Delta(n) = \{\ell_i - \ell_{i-1} : 2 \le i \le r\}$ of successive differences of factorization lengths of $n$, and $\Delta(S) = \bigcup_{n \in S} \Delta(n)$.  When there is no ambiguity, we often omit the subscripts and write $\mathsf Z(n)$, $\mathsf L(n)$, and $\Delta(n)$.  
\end{defn}

\begin{defn}\label{d:catenarydegree}
Fix an element $n \in S$.  For $\aa, \bb \in \mathsf Z(n)$, the \emph{greatest common divisor of $\aa$ and $\bb$} is given by 
$$\gcd(\aa,\bb) = (\min(a_1,b_1), \ldots, \min(a_r,b_r)) \in \NN^r,$$
and the \emph{distance between $\aa$ and $\bb$} (or the \emph{weight of $(\aa,\bb)$}) is given by 
$$d(\aa,\bb) = \max(|\aa - \gcd(\aa,\bb)|,|\bb - \gcd(\aa,\bb)|).$$
Given $\aa, \bb \in \mathsf Z(n)$ and $N \ge 1$, an \emph{$N$-chain from $\aa$ to $\bb$} is a sequence $\aa_1, \ldots, \aa_k \in \mathsf Z(n)$ of factorizations of $n$ such that (i) $\aa_1 = \aa$, (ii) $\aa_k = \bb$, and (iii) $d(\aa_{i-1},\aa_i) \le N$ for all $i \le k$.  The \emph{catenary degree of $n$}, denoted $\mathsf c(n)$, is the smallest non-negative integer $N$ such that there exists an $N$-chain between any two factorizations of $n$.  The \emph{set of catenary degrees of $S$} is the set $\mathsf C(S) = \{\mathsf c(m) : m \in S\}$.  
\end{defn}

\begin{example}\label{e:catenarydegree}
Consider the numerical monoid $S = \<11,36,39\>$.  The left-hand picture in Figure~\ref{fig:catenarydegree} depicts the factorizations of $450 \in S$ along with all pairwise distances.  There exists a $16$-chain between any two factorizations of $450$; one such $16$-chain between $(6,2,8)$ and $(24,3,2)$ is depicted with bold red edges.  Since every $16$-chain between these factorizations contains the edge labeled 16 at the bottom, we have $\mathsf c(450) = 16$.  

This can also be computed in a different way.  In the right-hand picture of Figure~\ref{fig:catenarydegree}, only distances of at most $16$ are depicted, and the resulting graph is connected.  Removing the edge labeled $16$ yields a disconnected graph, so we again conclude $\mathsf c(450) = 16$.  
\end{example}

\begin{figure}
\includegraphics[width=2.5in]{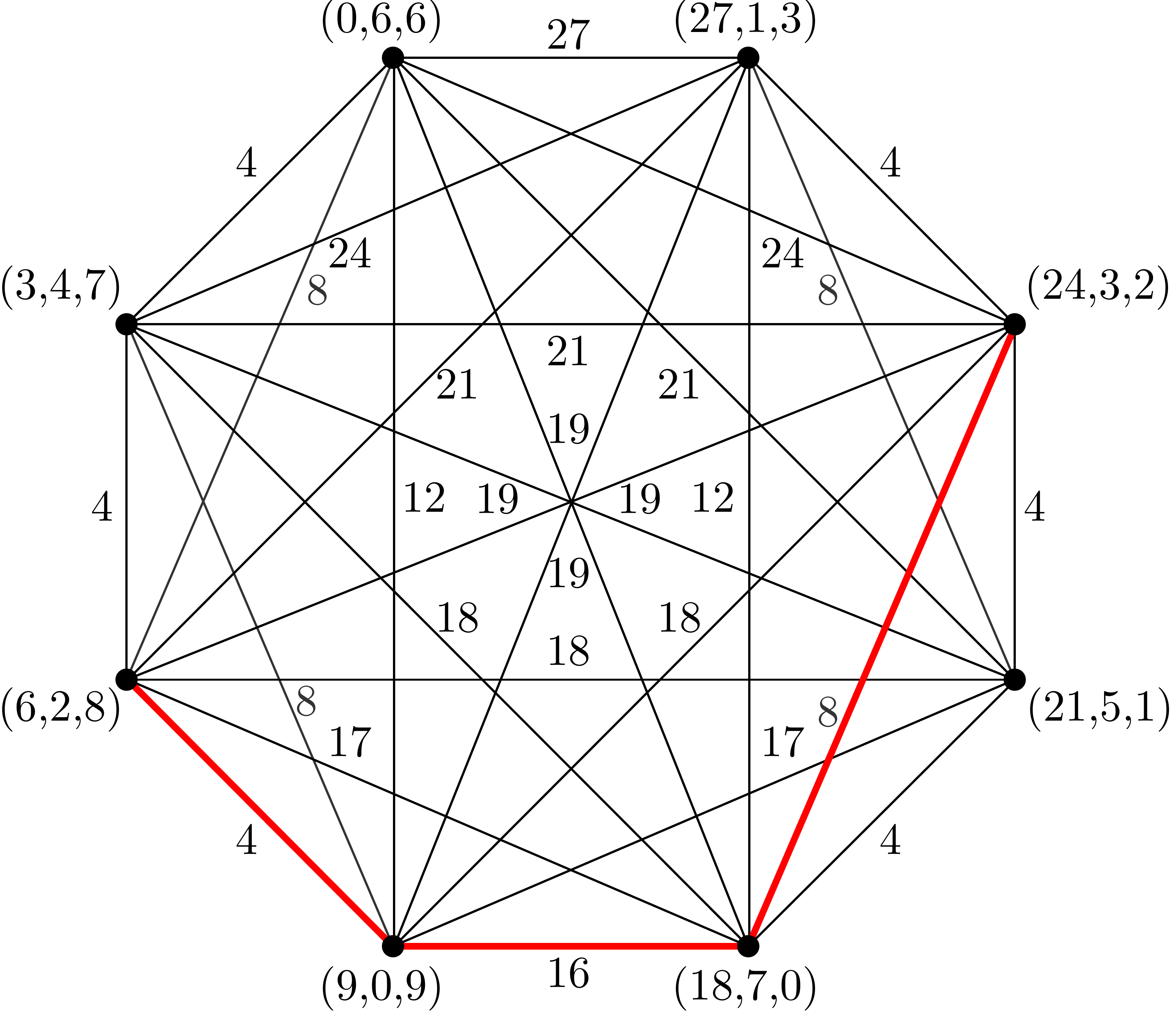}
\hspace{0.5in}
\includegraphics[width=2.5in]{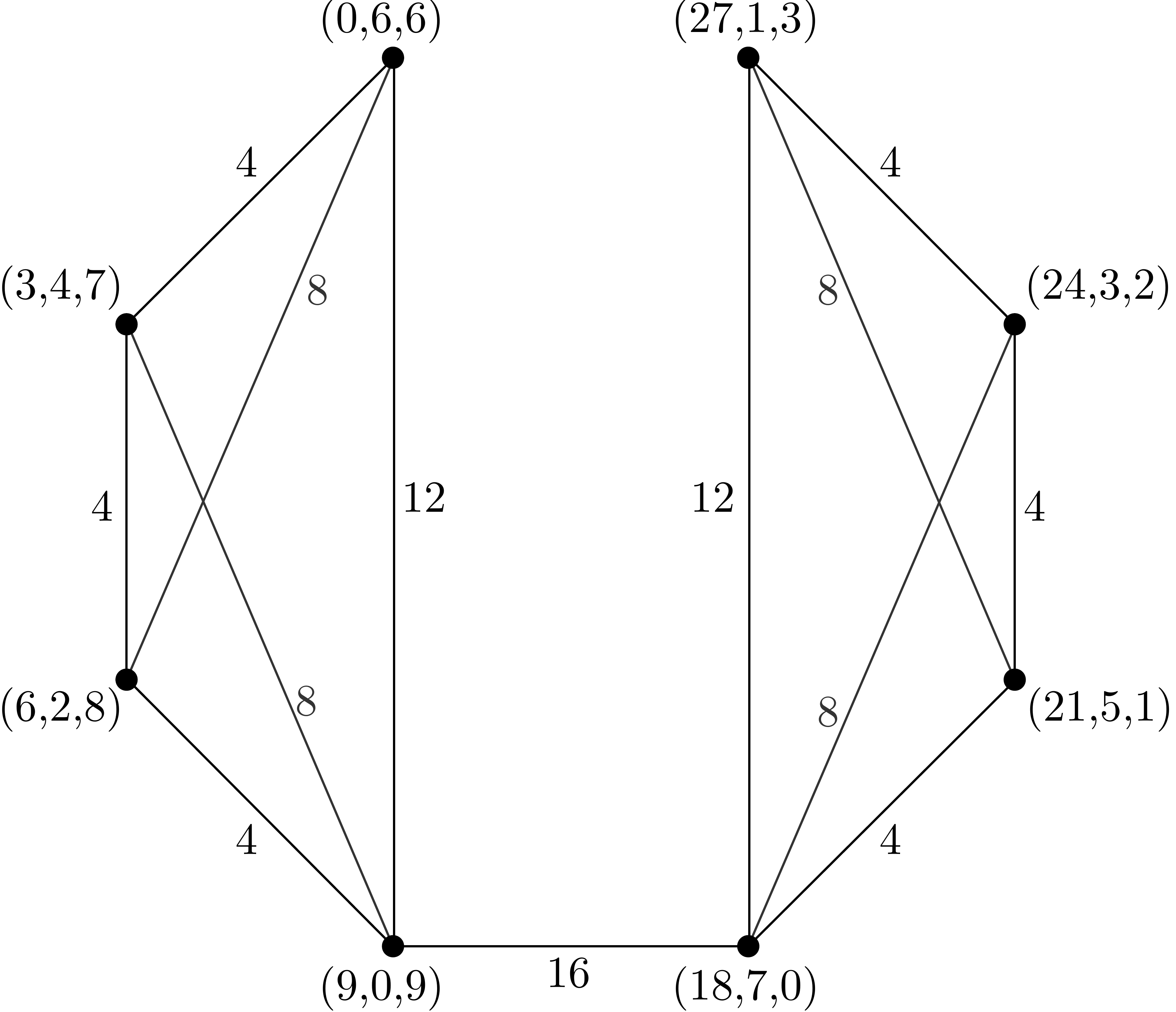}
\medskip
\caption{Computing the catenary degree of $450 \in S = \<11,36,39\>$, as in Example~\ref{e:catenarydegree}.  Each vertex is labeled with an element of $\mathsf Z(450)$, and each edge is labeled with the distance between the factorizations at either end.  The diagram on the left depicts all edges, and the diagram on the right includes only those edges labeled at most $\mathsf c(450) = 16$.  Both graphics were created using the computer algebra system \texttt{SAGE} \cite{sage}.}  
\label{fig:catenarydegree}
\end{figure}

Factorizations of certain monoid elements, called Betti elements (Definition~\ref{d:bettielement}), contain much of the structural information used to construct chains of factorizations, and thus are closely related to the catenary degree.  For instance, it is known that the maximum catenary degree occurring in a monoid $S$ is guaranteed to occur at a Betti element of $S$ (Theorem~\ref{t:maxcatdegree}).  Continuing in this vein, we will show in Corollary~\ref{c:minmaxcatdegree} that the minimum nonzero catenary degree of $S$ also occurs at a Betti element of $S$.  

\begin{defn}\label{d:bettielement}
Fix a finitely generated monoid $S$.  For each nonzero $n \in S$, consider the graph $\nabla_n$ with vertex set $\mathsf Z(n)$ in which two vertices $\aa, \bb \in \mathsf Z(n)$ share an edge if $\gcd(\aa,\bb) \ne 0$.  If $\nabla_n$ is not connected, then $n$ is called a \emph{Betti element} of $S$.  We write 
$$\Betti(S) = \{b \in S : \nabla_b \text{ is disconnected}\}$$
for the set of Betti elements of $S$.  
\end{defn}

\begin{thm}[{\cite[Theorem~3.1]{catenarytamefingen}}]\label{t:maxcatdegree}
For any finitely generated monoid $S$, 
$$\max \mathsf C(S) = \max\{\mathsf c(b) : b \in \Betti(S)\}.$$
\end{thm}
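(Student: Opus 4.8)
The inequality $\max\{\mathsf c(b) : b \in \Betti(S)\} \le \max \mathsf C(S)$ is immediate, since each Betti element lies in $S$. The plan for the reverse inequality is to set $N = \max\{\mathsf c(b) : b \in \Betti(S)\}$ (with $N = 0$ if $\Betti(S) = \nothing$) and prove $\mathsf c(n) \le N$ for every $n \in S$ by well-founded induction on the divisibility order $\preceq$, where $m \preceq n$ means $n - m \in S$. To know this order is well-founded I would first record that a finitely generated reduced cancellative commutative monoid is a finite-factorization monoid, so that $\{m \in S : m \preceq n\}$ is finite for each $n$; hence every nonempty subset of $S$ has a $\preceq$-minimal element and Noetherian induction applies.

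The combinatorial engine is a single observation: distances are invariant under adding a common generator. Concretely, if $\aa, \bb \in \mathsf Z(n)$ and $\mathbf e_i$ is the $i$-th standard basis vector, then $\gcd(\aa + \mathbf e_i, \bb + \mathbf e_i) = \gcd(\aa,\bb) + \mathbf e_i$, so $d(\aa + \mathbf e_i, \bb + \mathbf e_i) = d(\aa, \bb)$. I would use this to reduce any pair of factorizations sharing a generator. Suppose $\aa, \bb \in \mathsf Z(n)$ with $\gcd(\aa, \bb) \ne \mathbf 0$, and pick $i$ in the support of $\gcd(\aa,\bb)$, so $a_i, b_i \ge 1$. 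Then $\aa - \mathbf e_i$ and $\bb - \mathbf e_i$ are factorizations of $n - n_i \prec n$; by the induction hypothesis $\mathsf c(n - n_i) \le N$, so they are joined by an $N$-chain, and adding $\mathbf e_i$ to every factorization of that chain produces (by the invariance above) an $N$-chain from $\aa$ to $\bb$ in $\mathsf Z(n)$.

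It remains to connect factorizations with disjoint support, and this is exactly where the Betti elements enter. Fix $n$ and assume $\mathsf c(m) \le N$ for all $m \prec n$. If $\nabla_n$ is connected, then any two $\aa, \bb \in \mathsf Z(n)$ are joined by a path $\aa = \mathbf v_0, \mathbf v_1, \ldots, \mathbf v_m = \bb$ in $\nabla_n$ whose consecutive vertices satisfy $\gcd(\mathbf v_{j-1}, \mathbf v_j) \ne \mathbf 0$; applying the reduction of the previous paragraph to each consecutive pair and concatenating yields an $N$-chain from $\aa$ to $\bb$, whence $\mathsf c(n) \le N$. If instead $\nabla_n$ is disconnected, then $n \in \Betti(S)$ by definition, so $\mathsf c(n) \le N$ holds immediately from the choice of $N$. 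Either way $\mathsf c(n) \le N$, completing the induction and the theorem. The main thing to get right is the bookkeeping that makes the induction well-founded --- verifying the finite-factorization property and that subtracting a generator strictly decreases an element in $\preceq$ --- together with the observation that connectivity of $\nabla_n$ is precisely the dichotomy needed to either close the induction or land on a Betti element; the distance computations themselves are routine.
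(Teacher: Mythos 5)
Your argument is correct, but note that this paper never proves Theorem~\ref{t:maxcatdegree} itself: it is quoted from \cite[Theorem~3.1]{catenarytamefingen}, so the comparison must be with that source. The proof there runs through the theory of minimal presentations: the kernel congruence of the factorization map $\NN^k \to S$ is generated by pairs of factorizations of Betti elements, and each elementary move of such a presentation changes a factorization by distance at most $\max\{\mathsf c(b) : b \in \Betti(S)\}$. Your induction on the divisibility order inlines exactly the content of that presentation theorem: the translation-invariance $d(\aa + \mathbf e_i, \bb + \mathbf e_i) = d(\aa, \bb)$, combined with the dichotomy on connectivity of $\nabla_n$, is precisely the mechanism by which relations supported at Betti elements generate the whole kernel congruence. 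Your bookkeeping is also sound: the fibers $\mathsf Z(n)$ are antichains in $\NN^k$ (a nontrivial componentwise inequality between factorizations would give a nontrivial factorization of $0$, impossible in a reduced monoid), hence finite by Dickson's lemma, which gives both the finite-factorization property and well-foundedness of divisibility; and the degenerate case $\Betti(S) = \nothing$, where $N = 0$ forces $|\mathsf Z(n)| = 1$ throughout, survives the induction. Your route buys two things over the cited one: it is self-contained, avoiding presentation machinery entirely, and it actually establishes the sharper local bound $\mathsf c(n) \le \max\{\mathsf c(b) : b \in \Betti(S), \ b \text{ divides } n\}$, which is exactly the upper half of Corollary~\ref{c:minmaxcatdegree} that the paper, in Remark~\ref{r:dividingbetti}, says requires ``easily extending'' the proof of \cite{catenarytamefingen}.
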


\section{The minimum nonzero catenary degree}
\label{s:mincatdegree}

By Theorem~\ref{t:maxcatdegree}, in order to compute the maximum catenary degree achieved in a monoid $S$, it suffices to compute the catenary degree of each of its Betti elements.  The same cannot be said for all of the values in $\mathsf C(S)$; see Example~\ref{e:catenaryset}.  The main result of this section is Theorem~\ref{t:mincatdegree}, which implies that the minimum nonzero catenary degree achieved in $S$ also occurs at a Betti element of $S$.  

\begin{example}\label{e:catenaryset}
Let $S = \<11,25,29\> \subset \NN$.  The catenary degrees of $S$ are plotted in Figure~\ref{fig:catenaryset}.  The only Betti elements of $S$ are 58, 150, and 154, which have catenary degrees 4, 12, and 14, respectively.  However, $\mathsf c(175) = 11$ is distinct from each of these values.  However, by Corollary~\ref{c:minmaxcatdegree}, every element of $S$ with at least two distinct factorizations has catenary degree at least 4 and at most 14.  
\end{example}

\begin{figure}
\includegraphics[width=5in]{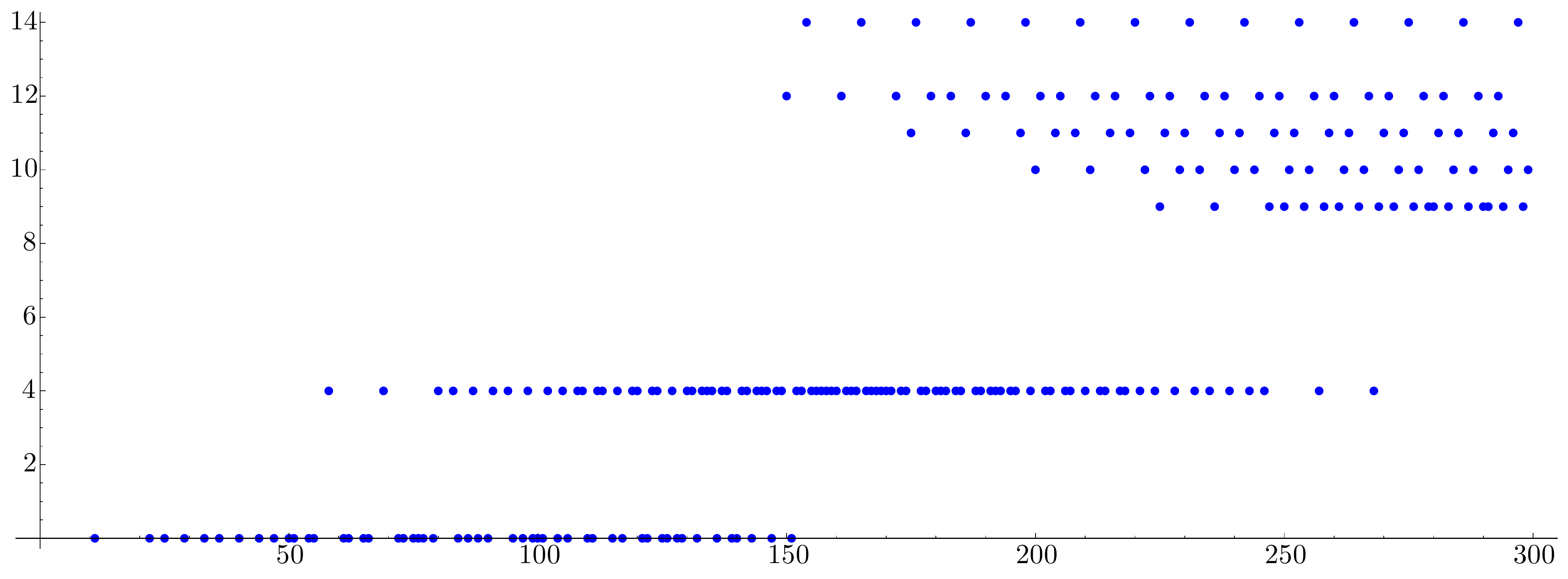}
\medskip
\caption{A \texttt{SAGE} plot \cite{sage} showing the catenary degrees for the numerical monoid $S = \<11,25,29\>$ discussed in Example~\ref{e:catenaryset}.}
\label{fig:catenaryset}
\end{figure}


\begin{remark}\label{r:mincatdegree}
Fix $n \in S$, and let $b$ denote the minimum catenary degree among Betti elements dividing $n$.  Theorem~\ref{t:mincatdegree} is proved using a combinatorial argument that edges with weight strictly less than $b$ are not sufficient to connect all factorizations of $n$.  The key idea in the proof is Proposition~\ref{p:mincatdegree}, which implies that any factorization with an edge of weight strictly less than $b$ cannot have maximal length in $\mathsf Z(n)$.  
\end{remark}

\begin{lemma}\label{l:mincatdegree}
Suppose $S = \<n_1, \ldots, n_k\>$.  Fix $n \in S$ with $|\mathsf Z(n)| \ge 2$, let $B$ be the set of Betti elements of $S$ that divide $n$, and let 
$$b = \min\{\mathsf c(m) : m \in B\}.$$
For each $\aa = (a_1, \ldots, a_k) \in Z(n)$, there exists $\aa' \in Z(n)$ such that $d(\aa,\aa') \ge b$.  
\end{lemma}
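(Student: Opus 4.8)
The plan is to reduce the statement to the case where $\aa$ itself has maximal length in $\mathsf Z(n)$, and then to prove the cleaner assertion that a length-maximal factorization lies at distance at least $b$ from \emph{every} other factorization. Granting this, the lemma follows at once: given $\aa$, let $\aa'$ be any length-maximal factorization if $\aa$ is not itself length-maximal, and otherwise let $\aa'$ be any factorization distinct from $\aa$ (which exists since $|\mathsf Z(n)|\ge 2$). In either case $\aa\ne\aa'$ and at least one of the two is length-maximal, so $d(\aa,\aa')\ge b$. So the whole content is the maximal-length assertion.

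The key observation I would isolate first is a bound on the catenary degree of \emph{any} element in terms of its longest factorization. If $w$ is a factorization of maximal length in $\mathsf Z(p)$ for some $p\in S$, then for every $w'\in\mathsf Z(p)$ we have $|w'|\le|w|$, hence $d(w,w')=|w|-|\gcd(w,w')|\le|w|$. Thus every factorization of $p$ is joined to $w$ by an edge of weight at most $|w|$, so the weight-$\le|w|$ graph on $\mathsf Z(p)$ is connected (a star centered at $w$) and $\mathsf c(p)\le|w|$. This ``star bound'' is the crux that lets me convert length information into a catenary-degree inequality.

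To prove the maximal-length assertion I would argue by contradiction and descend. Suppose a length-maximal $\aa^\ast$ has a neighbor $\bb$ with $d(\aa^\ast,\bb)<b$. Writing $g=\gcd(\aa^\ast,\bb)$, $\aa^\ast=g+x$, $\bb=g+y$, we get $\gcd(x,y)=\0$, so $x\ne y$ are factorizations of a common $m\mid n$ with $d(\aa^\ast,\bb)=d(x,y)=\max(|x|,|y|)=|x|<b$; moreover $x$ is length-maximal in $\mathsf Z(m)$, since any longer factorization of $m$ would lift, on adding $g$, to a factorization of $n$ longer than $\aa^\ast$. Now consider all triples $(m,x,y)$ of this shape (namely $m\mid n$, distinct $x,y\in\mathsf Z(m)$ with $\gcd(x,y)=\0$, $x$ length-maximal in $\mathsf Z(m)$, and $|x|<b$) and choose one minimizing the nonnegative integer $|x|$. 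I claim $m\in\Betti(S)$: if not, then $x,y$ lie in the same component of $\nabla_m$, so $x$ has a $\nabla_m$-neighbor $x_1$ with $h:=\gcd(x,x_1)\ne\0$; then $x-h$ and $x_1-h$ are distinct coprime factorizations of $m':=m-\sum_i h_in_i$, with $m'\mid n$, with $x-h$ length-maximal in $\mathsf Z(m')$, and with $|x-h|=|x|-|h|<|x|$, contradicting minimality. Hence $m$ is a Betti element dividing $n$, so $b\le\mathsf c(m)$; but the star bound gives $\mathsf c(m)\le|x|<b$, the desired contradiction.

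The main obstacle is the descent step showing the minimizing $m$ is genuinely a Betti element, i.e.\ disposing of the ``same $\nabla_m$-component'' case; this is precisely where length-maximality must be combined with the edge structure of $\nabla_m$, and I would check carefully that $x-h$ and $x_1-h$ are nonempty and coprime and that $x-h$ remains length-maximal in $\mathsf Z(m')$. The remaining ingredients (reduction by $\gcd$, translation invariance of $d$, and the star bound) are routine. Two loose ends I would address at the start: that the descent is well-founded, which is immediate because $|x|\in\NN$ strictly decreases; and that $b$ is defined at all, i.e.\ that $n$ has a Betti divisor, which follows from the identical descent applied to any two distinct factorizations of $n$ (dropping the condition $|x|<b$).
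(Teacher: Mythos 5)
Your proof is correct, but it follows a genuinely different route from the paper's. The paper argues by direct construction: given $\aa$, it takes a minimal element $\bb$ in the finite poset of tuples coordinatewise below $\aa$ whose image in $S$ has at least two factorizations; minimality forces $\gcd(\bb,\bb') = 0$ for every other $\bb' \in \mathsf Z(m)$ (where $m$ is the image of $\bb$), so $m$ is a Betti element dividing $n$, and the desired factorization is the lift $\aa' = \bb' + (\aa - \bb)$, giving $d(\aa,\aa') = d(\bb,\bb') \ge \mathsf c(m) \ge b$. You instead prove a stronger statement --- a length-maximal factorization has distance at least $b$ to \emph{every} other factorization --- by combining the star bound $\mathsf c(p) \le \max \mathsf L(p)$ with a minimal-counterexample descent on coprime pairs, where connectivity of $\nabla_m$ for non-Betti $m$ drives the descent and $b \le \mathsf c(m) \le |x| < b$ gives the contradiction. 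Both arguments descend to a Betti divisor of $n$, but yours is organized around length-maximality, an idea the paper only brings in afterwards: your intermediate assertion subsumes Proposition~\ref{p:mincatdegree} (if $d(\aa,\bb) < b$ then neither $\aa$ nor $\bb$ can be length-maximal) and makes Theorem~\ref{t:mincatdegree} immediate, since any chain leaving a length-maximal factorization must begin with an edge of weight at least $b$; so your route would streamline the whole section. It is also more careful on one point: the paper's final inequality $d(\bb,\bb') \ge \mathsf c(m)$ is asserted for an \emph{arbitrary} $\bb'$, whereas it is cleanly justified only when $\bb'$ is chosen of maximal length in $\mathsf Z(m)$ --- and the fact that justifies it then is precisely your star bound. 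What the paper's approach buys is brevity and an explicit $\aa'$. The details you flagged (nonemptiness and coprimality of $x-h$ and $x_1-h$, preservation of length-maximality, well-foundedness, nonemptiness of $B$) all check out; the only ingredient you use tacitly is finiteness of $\mathsf Z(n)$, so that length-maximal factorizations exist, which is standard for finitely generated reduced monoids and is likewise used implicitly in the paper's proof of Theorem~\ref{t:mincatdegree}.
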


\begin{proof}
Let $X = \{(x_1, \ldots, x_k) : 0 \le x_i \le a_i \text{ for } 1 \le i \le k\}$ and let 
$$F = \{x \in X: |\mathsf Z(x_1n_1 + \cdots + x_kn_k)| \ge 2\} \subset X.$$
Note that $F$ forms a finite nonempty partially ordered set with unique maximal element $\aa$.  Choose a minimal element $\bb = (b_1, \ldots, b_k) \in F$, and let $m = b_1n_1 + \ldots + b_kn_k$.  

Minimality of $\bb$ implies that $|\mathsf Z(m - n_i)| = 1$ for each positive $b_i$, so any factorization $\bb' \in \mathsf Z(m)$ with $\bb' \ne \bb$ satisfies $\gcd(a_i',b_i') = 0$.  In particular, $m \in B$.   Fixing $\bb' \in \mathsf Z(m)$ distinct from $\bb$, and choosing $\aa' = \bb' + \aa - \bb \in \mathsf Z(n)$, we have 
$$d(\aa, \aa') = d(\bb + \aa - \bb, \bb' + \aa - \bb) = d(\bb, \bb') \ge \mathsf c(m) \ge b,$$
as desired.  
\end{proof}

\begin{prop}\label{p:mincatdegree}
Suppose $S = \<n_1, \ldots, n_k\>$.  Fix $n \in S$ with $|\mathsf Z(n)| \ge 2$, let $B$ be the set of Betti elements of $S$ that divide $n$, and let 
$$b = \min\{\mathsf c(m) : m \in B\}.$$
Given distinct $\aa, \bb \in \mathsf Z(n)$ with $d(\aa, \bb) < b$, there exists $\xx \in \mathsf Z(n)$ such that 
$$\max\{|\aa|, |\bb|\} < |\xx|.$$
\end{prop}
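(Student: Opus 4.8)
The plan is to reduce to the case $\gcd(\aa,\bb) = \mathbf{0}$ by translating out the common part, and then to apply Lemma~\ref{l:mincatdegree} to a suitable divisor of $n$ in order to manufacture a long factorization. The guiding observation (cf.\ Remark~\ref{r:mincatdegree}) is that a short edge forces the two endpoints to sit strictly below the top length.

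First I would set $\mathbf{g} = \gcd(\aa,\bb)$, $\mathbf{p} = \aa - \mathbf{g}$, and $\mathbf{q} = \bb - \mathbf{g}$. Since $\gcd$ takes coordinatewise minima, $\mathbf{p}$ and $\mathbf{q}$ have disjoint support, so $\gcd(\mathbf{p},\mathbf{q}) = \mathbf{0}$ and, directly from the definition, $d(\aa,\bb) = \max(|\mathbf{p}|,|\mathbf{q}|)$. Writing $m = \sum_i p_i n_i \in S$, one checks that $\sum_i p_i n_i = n - \sum_i g_i n_i = \sum_i q_i n_i$, so $\mathbf{p}$ and $\mathbf{q}$ are distinct factorizations of $m$; hence $|\mathsf Z(m)| \ge 2$ and $m$ divides $n$. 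The target inequality $|\xx| > \max(|\aa|,|\bb|)$ now reduces to finding $\mathbf{y} \in \mathsf Z(m)$ with $|\mathbf{y}| > \max(|\mathbf{p}|,|\mathbf{q}|)$ and taking $\xx = \mathbf{g} + \mathbf{y}$, because $|\xx| = |\mathbf{g}| + |\mathbf{y}|$ while $\max(|\aa|,|\bb|) = |\mathbf{g}| + \max(|\mathbf{p}|,|\mathbf{q}|)$.

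Next I would apply Lemma~\ref{l:mincatdegree} to the element $m$ in place of $n$. Because $m$ divides $n$, every Betti element dividing $m$ also divides $n$, so the analogous quantity for $m$ is at least $b$. The lemma therefore produces $\mathbf{p}' \in \mathsf Z(m)$ with $d(\mathbf{p},\mathbf{p}') \ge b$. Setting $\mathbf{h} = \gcd(\mathbf{p},\mathbf{p}')$, I have $|\mathbf{p} - \mathbf{h}| \le |\mathbf{p}| \le \max(|\mathbf{p}|,|\mathbf{q}|) = d(\aa,\bb) < b \le d(\mathbf{p},\mathbf{p}')$, so the maximum defining $d(\mathbf{p},\mathbf{p}')$ cannot come from the $\mathbf{p}$-side; hence $|\mathbf{p}'| \ge |\mathbf{p}' - \mathbf{h}| \ge b$. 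Taking $\mathbf{y} = \mathbf{p}'$ yields $\xx = \mathbf{g} + \mathbf{p}'$ with $|\xx| \ge |\mathbf{g}| + b > |\mathbf{g}| + d(\aa,\bb) = \max(|\aa|,|\bb|)$, as required.

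The main obstacle is guaranteeing that the distant factorization handed back by Lemma~\ref{l:mincatdegree} is genuinely \emph{longer} than $\mathbf{p}$ and $\mathbf{q}$, not merely far away; this is exactly where the hypothesis $d(\aa,\bb) < b$ is used, since it forces $|\mathbf{p}| < b$ and thereby prevents the weight-$b$ overhang from lying on the $\mathbf{p}$-side. A secondary point to verify carefully is the transitivity bookkeeping for divisibility of Betti elements, which ensures the value $b$ computed for $n$ remains a valid lower bound when the lemma is invoked on the smaller element $m$.
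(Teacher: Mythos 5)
Your proof is correct and follows essentially the same route as the paper's: both reduce to the coprime situation by subtracting $\gcd(\aa,\bb)$, invoke Lemma~\ref{l:mincatdegree} (noting that Betti elements dividing the smaller element also divide $n$, so the bound $b$ persists), and use $d(\aa,\bb) < b$ to force the distance to be realized on the far side, making that factorization long. The only difference is organizational—you perform the gcd-reduction uniformly at the outset, whereas the paper treats $\gcd(\aa,\bb)=\0$ as a base case and reduces the general case to it—so this is the same argument in a slightly cleaner packaging.
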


\begin{proof}
First, suppose $\gcd(\aa, \bb) = 0$, so that $d(\aa, \bb) = \max\{|\aa|, |\bb|\}$.  By Lemma~\ref{l:mincatdegree}, there exists $\xx \in Z(n)$ such that $d(\aa, \xx) \ge b$.  
The strict inequality
$$|\aa - \gcd(\aa, \xx)| \le |\aa| < b \le d(\aa, \xx) = \max\{|\aa - \gcd(\aa, \xx)|, |\xx - \gcd(\aa, \xx)|\}$$
implies $d(\aa, \xx) = |\xx - \gcd(\aa, \xx)|$.  This means 
$$\max\{|\aa|, |\bb|\} = d(\aa, \bb) < b \le d(\aa, \xx) = |\xx - \gcd(\aa, \xx)| \le |\xx|,$$
which proves the claim in this case.  

Now, suppose $\gcd(\aa, \bb) \ne 0$.  Let $\aa' = \aa - \gcd(\aa, \bb)$ and $\bb' = \bb - \gcd(\aa, \bb)$, and fix $n' \in S$ such that $\aa', \bb' \in \mathsf Z(n')$.  Since any Betti element dividing $n'$ also divides $n$, the above argument ensures the existence of $\xx' \in \mathsf Z(n')$ such that 
$$\max\{|\aa|, |\bb|\} = \max\{|\aa'|, |\bb'|\} + |\gcd(\aa, \bb)| < |\xx'| + |\gcd(\aa, \bb)|.$$
Choosing $\xx = \xx' + \gcd(\aa,\bb)$ completes the proof.  
\end{proof}

\begin{thm}\label{t:mincatdegree}
Suppose $S = \<n_1, \ldots, n_k\>$.  Fix $n \in S$ with $|\mathsf Z(n)| \ge 2$, and let $B$ denote the set of Betti elements of $S$ that divide $n$.  Then 
$$\mathsf c(n) \ge \min\{\mathsf c(m) : m \in B\}.$$
\end{thm}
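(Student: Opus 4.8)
The plan is to deduce Theorem~\ref{t:mincatdegree} as a direct corollary of Proposition~\ref{p:mincatdegree}, which has already done the combinatorial heavy lifting. Set $b = \min\{\mathsf c(m) : m \in B\}$; the goal is to show $\mathsf c(n) \ge b$. By Definition~\ref{d:catenarydegree}, $\mathsf c(n)$ is the smallest $N$ for which every pair of factorizations is joined by an $N$-chain, so to prove $\mathsf c(n) \ge b$ it suffices to exhibit two factorizations of $n$ that cannot be connected by any chain all of whose consecutive distances are strictly less than $b$. Equivalently, I would consider the graph $G_{<b}$ on the vertex set $\mathsf Z(n)$ whose edges join pairs at distance strictly less than $b$, and argue that $G_{<b}$ is disconnected.

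The key observation is that Proposition~\ref{p:mincatdegree} forbids any factorization of maximal length from having an incident edge of weight less than $b$. First I would pick $\xx \in \mathsf Z(n)$ of maximal length, i.e.\ $|\xx| = \max \mathsf L(n)$; such an $\xx$ exists since $\mathsf Z(n)$ is finite. I claim $\xx$ is isolated in $G_{<b}$: if some $\bb \in \mathsf Z(n)$ with $\bb \ne \xx$ satisfied $d(\xx, \bb) < b$, then applying Proposition~\ref{p:mincatdegree} to the pair $\xx, \bb$ would produce $\yy \in \mathsf Z(n)$ with $\max\{|\xx|, |\bb|\} < |\yy|$, contradicting the maximality of $|\xx|$. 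Hence $\xx$ has no neighbors at distance less than $b$. Since $|\mathsf Z(n)| \ge 2$, there is at least one other vertex, so $G_{<b}$ has the isolated vertex $\xx$ together with a nonempty remainder, and is therefore disconnected.

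From disconnectedness the conclusion is immediate: any chain from $\xx$ to a distinct factorization $\bb$ must contain a consecutive pair at distance at least $b$, so no $N$-chain with $N < b$ can connect $\xx$ to $\bb$. Therefore $\mathsf c(n) \ge b$, as required. I do not anticipate a genuine obstacle here, since the substance of the argument is contained in Proposition~\ref{p:mincatdegree}; the only point requiring mild care is the logical translation between ``$\mathsf c(n) < b$'' and ``$G_{<b}$ is connected,'' which follows directly from the definition of an $N$-chain and of $\mathsf c(n)$ as the least $N$ admitting chains between all pairs. One should also confirm that the maximal-length vertex $\xx$ is well-defined (finiteness of $\mathsf Z(n)$) and that the hypothesis $|\mathsf Z(n)| \ge 2$ guarantees $G_{<b}$ has a second vertex so that ``isolated vertex'' genuinely yields disconnection.
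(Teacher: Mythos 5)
Your proof is correct and takes essentially the same approach as the paper: both arguments deduce the theorem directly from Proposition~\ref{p:mincatdegree} via a maximal-length argument showing that some factorization of $n$ is isolated in the graph whose edges are pairs at distance less than $b$. The only cosmetic difference is that you maximize length over all of $\mathsf Z(n)$, whereas the paper maximizes over the set $V$ of factorizations incident to an edge of weight less than $b$ and concludes that the factorization produced by the proposition lies outside $V$; your version is, if anything, slightly more streamlined and matches the phrasing of Remark~\ref{r:mincatdegree}.
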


\begin{proof}
Let $b = \min\{\mathsf c(m) : m \in B\}$, and let 
$$V = \{\aa \in \mathsf Z(n) : d(\aa, \bb) < b \text{ for some } \bb \in \mathsf Z(n)\} \subset \mathsf Z(n).$$
If $V = \emptyset$, then $d(\aa, \bb) \ge b$ for all $\aa, \bb \in \mathsf Z(n)$, and it follows that $\mathsf c(n) \ge b$.  Otherwise, choose $\aa \in V$ such that $|\aa|$ is maximal among elements of $V$.  Since $\aa \in V$, there exists $\bb \in \mathsf Z(n)$ such that $d(\aa, \bb) < b$.  By Proposition~\ref{p:mincatdegree}, there exists $\xx \in \mathsf Z(n)$ such that $\max\{|\aa|, |\bb|\} < |\xx|$.  Since $|\aa| \le \max\{|\aa|, |\bb|\} < |\xx|$, maximality of $|\aa|$ ensures that $\xx \not\in V$.  Consequently, $d(\xx,\xx') \ge b$ for all $\xx' \in Z(n)$ with $\xx' \ne \xx$, so $\mathsf c(n) \ge b$.  
\end{proof}

We conclude this section with several immediate consequences Theorem~\ref{t:mincatdegree}.  The first is Corollary~\ref{c:mincatdegree}, in the spirit of Theorem~\ref{t:maxcatdegree}.  

\begin{cor}\label{c:mincatdegree}
If $n \in S$ satisfies $\mathsf c(n) > 0$, then 
$$\mathsf c(n) \ge \min\{\mathsf c(m) : m \in \Betti(S)\}.$$
In particular, $\min(\mathsf C(S) \setminus \{0\})$ is the catenary degree of some Betti element of $S$.  
\end{cor}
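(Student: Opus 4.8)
The plan is to derive this directly from Theorem~\ref{t:mincatdegree}, treating the corollary as a bookkeeping exercise over set inclusions with attention to two small edge cases.

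First I would observe that the hypothesis $\mathsf c(n) > 0$ forces $|\mathsf Z(n)| \ge 2$: if $n$ had at most one factorization there would be no pair of distinct factorizations to connect, and the smallest admissible $N$ would be $0$. Thus $n$ lies within the scope of Theorem~\ref{t:mincatdegree}. Letting $B$ denote the set of Betti elements of $S$ dividing $n$, that theorem gives $\mathsf c(n) \ge \min\{\mathsf c(m) : m \in B\}$. Since every element of $B$ is a Betti element of $S$, we have $B \subseteq \Betti(S)$, whence $\min\{\mathsf c(m) : m \in B\} \ge \min\{\mathsf c(m) : m \in \Betti(S)\}$, because the minimum over the larger index set is no larger. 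Chaining the two inequalities yields the asserted bound. The one point requiring care is that $B$ is nonempty, so that its minimum is defined; this is precisely what the construction in the proof of Lemma~\ref{l:mincatdegree} provides, since $|\mathsf Z(n)| \ge 2$ guarantees that some divisor of $n$ is a Betti element.

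For the final claim, write $c^\ast = \min(\mathsf C(S) \setminus \{0\})$, assuming this set is nonempty (otherwise there is nothing to prove). I would first record that every Betti element $b$ satisfies $\mathsf c(b) > 0$: disconnectedness of $\nabla_b$ requires at least two vertices, so $|\mathsf Z(b)| \ge 2$, which forces $\mathsf c(b) \ge 1$. Consequently $\{\mathsf c(m) : m \in \Betti(S)\} \subseteq \mathsf C(S) \setminus \{0\}$, giving $\min\{\mathsf c(m) : m \in \Betti(S)\} \ge c^\ast$. Conversely, choosing any $n$ with $\mathsf c(n) = c^\ast > 0$ and applying the inequality established above gives $c^\ast \ge \min\{\mathsf c(m) : m \in \Betti(S)\}$. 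The two inequalities force equality, and since the minimum of the nonempty set $\{\mathsf c(m) : m \in \Betti(S)\}$ is attained, $c^\ast = \mathsf c(b)$ for some $b \in \Betti(S)$.

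I expect no genuine obstacle beyond Theorem~\ref{t:mincatdegree} itself; the only subtleties are the two edge cases flagged above, namely confirming $B \neq \emptyset$ and that Betti elements carry strictly positive catenary degree, both immediate from the definitions. I would also note in passing that nonemptiness of $\mathsf C(S) \setminus \{0\}$ guarantees $\Betti(S) \neq \emptyset$ by the same divisor argument, so that the displayed minimum over $\Betti(S)$ is meaningful.
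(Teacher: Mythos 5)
Your proof is correct and follows exactly the route the paper intends: Corollary~\ref{c:mincatdegree} is stated there as an immediate consequence of Theorem~\ref{t:mincatdegree}, obtained by restricting from $\Betti(S)$ to the Betti elements dividing $n$ and taking minima, just as you do. Your additional checks (that $\mathsf c(n) > 0$ forces $|\mathsf Z(n)| \ge 2$, that the set $B$ is nonempty via Lemma~\ref{l:mincatdegree}, and that Betti elements have positive catenary degree) are exactly the routine details the paper leaves implicit.
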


\begin{remark}\label{r:dividingbetti}
The proof of Theorem~\ref{t:maxcatdegree} given in \cite{catenarytamefingen} can be easily extended to show that the catenary degree of any monoid element is bounded above by the catenary degrees of the Betti elements dividing it.  We record this in Corollary~\ref{c:minmaxcatdegree}.  
\end{remark}

\begin{cor}\label{c:minmaxcatdegree}
Fix $n \in S$ with $\mathsf c(n) > 0$, and let $B$ denote the set of Betti elements of $S$ that divide $n$.  Then 
$$\min\{\mathsf c(m) : m \in B\} \le \mathsf c(n) \le \max\{\mathsf c(m) : m \in B\}.$$
\end{cor}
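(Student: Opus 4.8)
The statement is two inequalities, and I would handle them separately, as they have rather different flavors.

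The lower bound $\min\{\mathsf c(m) : m \in B\} \le \mathsf c(n)$ is essentially immediate from Theorem~\ref{t:mincatdegree}. The only point needing attention is that Theorem~\ref{t:mincatdegree} is phrased under the hypothesis $|\mathsf Z(n)| \ge 2$, whereas here the hypothesis is $\mathsf c(n) > 0$. But a single factorization is joined to itself by a trivial $0$-chain, so $\mathsf c(n) > 0$ forces $|\mathsf Z(n)| \ge 2$, and the theorem applies verbatim.

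For the upper bound $\mathsf c(n) \le M := \max\{\mathsf c(m) : m \in B\}$, I would argue by induction on $n$ in the divisibility order of $S$, which is well founded since $S$ is a finitely generated reduced cancellative monoid (it admits a positive grading). If $n$ is itself a Betti element then $n \in B$ and the bound is trivial, so I may assume $\nabla_n$ is connected. It then suffices to exhibit an $M$-chain between arbitrary $\aa, \bb \in \mathsf Z(n)$; since $\nabla_n$ is connected I can fix a path from $\aa$ to $\bb$ in $\nabla_n$ and, concatenating $M$-chains along its edges, reduce to the case in which $\aa$ and $\bb$ are adjacent, i.e. $\mathbf{g} := \gcd(\aa,\bb) \ne \0$. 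Letting $n'$ denote the element of $S$ with $\aa - \mathbf{g}, \bb - \mathbf{g} \in \mathsf Z(n')$, the inequality $\mathbf{g} \ne \0$ makes $n'$ a proper divisor of $n$; every Betti element dividing $n'$ divides $n$, so the corresponding maximum $M'$ satisfies $M' \le M$, and the inductive hypothesis furnishes an $M'$-chain from $\aa - \mathbf{g}$ to $\bb - \mathbf{g}$ in $\mathsf Z(n')$.

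The one genuine computation, which I would isolate as a short lemma, is that translation by a fixed vector preserves distances: a coordinatewise check gives $\gcd(\mathbf{u} + \mathbf{g}, \mathbf{v} + \mathbf{g}) = \gcd(\mathbf{u},\mathbf{v}) + \mathbf{g}$, and hence $d(\mathbf{u} + \mathbf{g}, \mathbf{v} + \mathbf{g}) = d(\mathbf{u},\mathbf{v})$. Adding $\mathbf{g}$ to each factorization of the chain above therefore yields an $M'$-chain, hence an $M$-chain, from $\aa$ to $\bb$ in $\mathsf Z(n)$, closing the induction. I expect the main obstacle to be bookkeeping rather than conceptual: one must check that the passage to adjacent vertices of $\nabla_n$ strictly decreases $n$ in the divisibility order (it does, since $\mathbf{g} \ne \0$), and that concatenating the per-edge chains maintains the single uniform bound $M$ along the entire path.
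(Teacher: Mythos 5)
Your proof is correct and follows the paper's intended route: the lower bound is exactly Theorem~\ref{t:mincatdegree} (together with your observation that $\mathsf c(n) > 0$ forces $|\mathsf Z(n)| \ge 2$), and your inductive argument for the upper bound---reducing along edges of $\nabla_n$ to a proper divisor and using translation-invariance of the distance $d$---is precisely the extension of the proof of Theorem~\ref{t:maxcatdegree} from \cite{catenarytamefingen} that the paper invokes in Remark~\ref{r:dividingbetti} without writing out. The only difference is that you supply in full the details the paper delegates to that citation.
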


Lastly, Corollary~\ref{c:singlecatdegree} classifies those monoids $S$ for which $|\mathsf C(S)|$ is minimal, and generalizes \cite[Theorem~19]{uniquebetti}.  

\begin{cor}\label{c:singlecatdegree} 
$\mathsf C(S)=\{0, c\}$ if and only if $\mathsf c(m) = c$ for all $m \in \Betti(S)$.  
\end{cor}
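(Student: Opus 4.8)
The plan is to prove the biconditional $\mathsf C(S) = \{0,c\}$ if and only if $\mathsf c(m) = c$ for all $m \in \Betti(S)$ by establishing each direction separately, leaning heavily on Corollary~\ref{c:mincatdegree} and Corollary~\ref{c:minmaxcatdegree}. Observe first that $0 \in \mathsf C(S)$ always, since any element with a unique factorization (for instance, an irreducible generator) has catenary degree $0$. So the content of the statement $\mathsf C(S) = \{0,c\}$ is that every element of $S$ admitting at least two factorizations has catenary degree exactly $c$.

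For the easier direction, I would assume $\mathsf c(m) = c$ for all $m \in \Betti(S)$ and show $\mathsf C(S) \subseteq \{0,c\}$. Fix any $n \in S$ with $\mathsf c(n) > 0$; then $|\mathsf Z(n)| \ge 2$, so $n$ is divisible by at least one Betti element and the set $B$ of Betti elements dividing $n$ is nonempty. Applying Corollary~\ref{c:minmaxcatdegree} gives
$$\min\{\mathsf c(m) : m \in B\} \le \mathsf c(n) \le \max\{\mathsf c(m) : m \in B\}.$$
Since by hypothesis every Betti element has catenary degree $c$, both the minimum and the maximum over $B$ equal $c$, forcing $\mathsf c(n) = c$. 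Hence every nonzero catenary degree equals $c$, and combined with the presence of $0$ this yields $\mathsf C(S) = \{0,c\}$ (assuming $S$ is not free, i.e. some element genuinely factors non-uniquely, which is needed for $c$ to actually be attained).

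For the converse, I would assume $\mathsf C(S) = \{0,c\}$ and deduce $\mathsf c(m) = c$ for every $m \in \Betti(S)$. The key point is that each Betti element $b$ has a disconnected graph $\nabla_b$ (Definition~\ref{d:bettielement}), which forces $\mathsf c(b) > 0$: disconnectedness of $\nabla_b$ means there are factorizations of $b$ sharing no common atom, so they cannot be joined by any chain of edges all of positive weight below $|b|$, giving $\mathsf c(b) \ge 1 > 0$. Since $\mathsf c(b) \in \mathsf C(S) = \{0,c\}$ and $\mathsf c(b) \ne 0$, we conclude $\mathsf c(b) = c$. The main subtlety to verify is exactly this claim that every Betti element has strictly positive catenary degree; this should follow directly from the definition of Betti element together with the definition of catenary degree, since a disconnected factorization graph rules out $\mathsf c(b) = 0$ (the latter would require a unique factorization, making $\nabla_b$ a single vertex and hence connected). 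I expect this small positivity argument to be the only genuine obstacle; once it is in place, membership $\mathsf c(b) \in \{0,c\}$ closes the proof.
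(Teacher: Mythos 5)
Your proof is correct and is essentially the paper's intended argument: the paper states this corollary as an immediate consequence of Corollary~\ref{c:minmaxcatdegree} (equivalently, of Theorems~\ref{t:maxcatdegree} and~\ref{t:mincatdegree}), which is exactly what you use for one direction, combined with the observation that a Betti element has at least two factorizations and hence positive catenary degree for the other. Your parenthetical caveat that $\Betti(S)$ must be nonempty (i.e.\ $S$ not factorial) for the value $c$ to actually be attained is a fair flag of an edge case the paper leaves implicit.
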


\begin{remark}\label{r:mindelta}
The set $\mathsf C(S)$ of catenary degrees occurring in a monoid $S$ shares many similarities to the delta set $\Delta(S)$.  In fact, the maximum element of $\Delta(S)$ is known to lie in the delta set of a Betti element \cite{semitheor}.  In contrast, this need not hold for the minimum element of $\Delta(S)$.  For example, the numerical monoid $S = \<30,52,55\>$ has delta set $\Delta(S) = \{1,2,3,5\}$, but its only Betti elements are 260 and 330, and their delta sets are given by $\Delta(260) = \{2\}$ and $\Delta(330) = \{5\}$.  
\end{remark}

In general, it is not easy to prove that a given value $c$ does not equal the catenary degree of any elements of a given monoid $S$ (the same difficulty arises when computing the delta set of a monoid; see Remark~\ref{r:mindelta}).  Computer software can be used to compute the catenary degree of individual elements of $S$ (for instance, the \texttt{GAP} package \texttt{numericalsgps} \cite{numericalsgpsgap} can do this).  However, computing $\mathsf C(S)$ via exhaustive search is not possible, and it can be difficult to determine when the whole set $\mathsf C(S)$ has been computed.  An answer to Problem~\ref{prob:gammi} would allow for a more effective use of computer software packages in studying $\mathsf C(S)$.  

\begin{prob}\label{prob:gammi}
Given a monoid $S$, determine a (computable) finite class of elements of $S$ on which every catenary degree in $\mathsf C(S)$ occurs.  
\end{prob}

\section{Some extremal examples of $\mathsf C(S)$}
\label{s:extremecatset}

The delta set realization problem \cite{deltarealiz} asks which finite sets $D \subset \NN$ satisfy $\Delta(S) = D$ for some monoid $S$.  The results of this section pertain to the analogous question for sets of catenary degrees, which we record this here as Problem~\ref{pr:realization}.  Note that for any $S$, Definition~\ref{d:catenarydegree} implies $1 \notin C(S)$ and every atom of $S$ has catenary degree 0.  

\begin{prob}\label{pr:realization}
Fix a finite set $C \subset \NN$ such that $C \cap \{0,1\} = \{0\}$.  Does there exist a finitely generated monoid $S$ with $\mathsf C(S) = C$?  
\end{prob}

This section aims to provide an initial investigation for Problem~\ref{pr:realization} by demonstrating some extremal properties sets of catenary degrees can achieve.  In particular, 

\begin{enumerate}[(i)]
\item 
we exhibit finitely generated monoids $S$ achieving sets of catenary degrees with extremal cardonality and density (as a subset of $\{0\} \cup [2, \mathsf c(S)] $), and 

\item 
we establish the independence of these properties from the number of atoms (with one exception; see Remark~\ref{r:fullset}).  

\end{enumerate}

We begin with Remark~\ref{r:singlecatdegree}, which identifies a family of monoids achieving a single nonzero catenary degree, followed by Theorem~\ref{t:arithmetical}, which identifies a family of monoids whose only nonzero catenary degrees are 2 and the maximum.  Both families consist of numerical monoids, and within each family, the number of atoms can be chosen arbitrarily large.  


\begin{remark}\label{r:singlecatdegree}
Corollary~\ref{c:singlecatdegree} classifies monoids $S$ with exactly one nonzero element in $\mathsf C(S)$.  Such monoids can also have arbitrarily large minimal generating sets.  In particular, if $p_1 < \cdots < p_k$ are $k$ distinct primes, then the numerical monoid 
$$S = \<(p_1 \cdots p_k)/p_k, \ldots, (p_1 \cdots p_k)/p_1\>$$
has a single Betti element, so $\mathsf C(S) = \{0,p_k\}$ by Corollary~\ref{c:singlecatdegree}.  See \cite{uniquebetti} for more detail on monoids with a unique Betti element.  
\end{remark}



\begin{thm}\label{t:arithmetical}
Fix $c \ge 3$ and $k \ge 3$.  Let $S = \<k, k + (c - 2), \ldots, k + (k - 1)(c - 2)\>$.  Then $\mathsf C(S) = \{0, 2, c\}$.  
\end{thm}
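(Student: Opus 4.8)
The plan is to exploit the arithmetic structure of the generators $n_i = k + (i-1)(c-2)$, and in particular that the smallest generator $k$ equals the number of generators. Two families of relations drive everything: the \emph{length‑preserving trades} (type A) $n_i + n_j = n_{i-1} + n_{j+1}$, valid whenever $2 \le i$ and $j \le k-1$, each a move of weight $2$; and the single \emph{length‑changing relation} (type B) $c\,n_1 = n_2 + n_k$, which one verifies by the direct computation $ck = 2k + (k-1)(c-2) = n_2 + n_k$ and which is a move of weight $c$. I would first note that the standing convention (that $n_1,\dots,n_k$ are exactly the atoms) forces $\gcd(k,c-2)=1$: if $g=\gcd(k,c-2)>1$ then $n_{1+k/g} = (1+(c-2)/g)\,n_1$ would be reducible. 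Assuming $\gcd(k,c-2)=1$, I introduce the two invariants $\ell(\aa)=\sum_i a_i$ (length) and $p(\aa)=\sum_i (i-1)a_i$ (moment), tied to the value by $n = k\,\ell(\aa)+(c-2)\,p(\aa)$; since $\gcd(k,c-2)=1$, the lengths of all factorizations of a fixed element differ by multiples of $c-2$.

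Next I would prove the key dichotomy. The crucial observation is that \emph{any length‑changing move has weight at least $c$}: if $\mathbf u,\mathbf v$ are factorizations of a common value with disjoint support and $|\mathbf u|>|\mathbf v|$, then $|\mathbf v|\ge 2$ (otherwise the single atom $\mathbf v$ would be a nontrivial sum of atoms), while $|\mathbf u|-|\mathbf v|$ is a positive multiple of $c-2$ and hence at least $c-2$, so $|\mathbf u|\ge 2+(c-2)=c$. Consequently, if all factorizations of $n$ share a single length, then type‑A trades connect $\mathsf Z(n)$ using only weight‑$2$ moves, so $\mathsf c(n)\in\{0,2\}$; and if $n$ has two factorizations of different lengths, then every chain joining a shortest to a longest factorization must contain a length‑changing step, forcing $\mathsf c(n)\ge c$. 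This already gives $\mathsf c(n)\in\{0,2\}\cup[c,\infty)$ for every $n$, so no catenary degree lies strictly between $2$ and $c$.

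It remains to show $\mathsf c(n)\le c$ for all $n$, which I would do by proving that the type‑A and type‑B moves connect $\mathsf Z(n)$ for every $n$; since these moves have weight $2$ and $c$, connectivity gives $\mathsf c(n)\le c$ directly. Within a fixed length class this is a transportation lemma: nonnegative integer vectors with prescribed sum $\ell$ and prescribed weighted sum $p$ are connected by the moves ``shift one unit down one position and another unit up one position,'' which are exactly the type‑A trades. The heart of the argument --- and the step I expect to be the main obstacle --- is connecting adjacent length classes: I must show that \textbf{every non‑minimal length class contains a factorization admitting a length‑decreasing type‑B move}. Here I would argue by a moment‑budget estimate. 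Non‑minimality of the length‑$\ell$ class means a factorization of length $\ell-(c-2)$, and hence of moment $p+k$, exists, i.e. $p+k\le(\ell-(c-2))(k-1)$; this inequality is exactly what is needed to build a length‑$\ell$ factorization containing a block of $c$ atoms whose total moment $s$ satisfies $s\le k-2$ (one puts the surplus moment on the remaining $\ell-c$ atoms). Such a block sums to the same value as the two atoms $n_{s+2}+n_k$, and replacing it by them is a length‑decreasing move of weight $c$ landing in the length‑$(\ell-(c-2))$ class.

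Assembling these, every factorization is joined by weight‑$\le c$ moves to the minimum‑length class, which is itself type‑A connected, so $\mathsf c(n)\le c$ for all $n$ and thus $\max\mathsf C(S)\le c$. Finally I would exhibit the three values. The value $0$ occurs at any atom. The value $2$ occurs at $2n_2 = n_1 + n_3$, whose only factorizations are $(0,2,0,\dots,0)$ and $(1,0,1,0,\dots,0)$ --- both of length $2$, which requires $k\ge 3$ --- so $\mathsf c(2n_2)=2$. The value $c$ occurs at $ck = c\,n_1 = n_2 + n_k$, which has factorizations of lengths $c$ and $2$, giving $\mathsf c(ck)\ge c$ and hence $\mathsf c(ck)=c$ by the upper bound. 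Together with the dichotomy of the second paragraph, this yields $\mathsf C(S)=\{0,2,c\}$.
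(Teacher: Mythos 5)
Your proposal is correct, but it takes a genuinely different route from the paper: the paper's entire proof of this theorem is a one-line citation, ``Since $S$ is generated by an arithmetic sequence, apply \cite[Theorem~3.1]{catenaryarith},'' outsourcing everything to the known element-wise computation of catenary degrees in numerical monoids generated by arithmetic sequences. You instead prove the statement from scratch, and the skeleton is sound: the paper's convention that the listed generators are the atoms does force $\gcd(k,c-2)=1$; the identity $n = k\,\ell(\aa)+(c-2)\,p(\aa)$ does force all lengths in $\mathsf Z(n)$ to be congruent modulo $c-2$; your dichotomy (any length-changing step has weight at least $2+(c-2)=c$, while a single length class is connected by weight-$2$ trades) is exactly right; and your moment-budget inequality is the correct and exact condition, since nonemptiness of the length-$(\ell-(c-2))$ class reads $p+k \le (\ell-(c-2))(k-1)$, which is equivalent to $p \le (\ell-c)(k-1)+(k-2)$, precisely what is needed to assemble a length-$\ell$ factorization containing $c$ atoms of total moment $s \le k-2$ that can be traded for $n_{s+2}+n_k$. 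Two details are asserted rather than proved and should be filled in for a complete write-up: (i) the transportation lemma (connectivity of a fixed length class under type-A trades), which follows by repeatedly moving one unit up from position $x$ and one unit down from position $y$ whenever $y \ge x+2$, strictly decreasing $\sum_i (i-1)^2 a_i$ until the support lies in two adjacent positions, a configuration uniquely determined by $(\ell,p)$; and (ii) the implicit claim that non-minimality of the length-$\ell$ class makes the class of length exactly $\ell-(c-2)$ nonempty (length classes cannot skip), which follows since a factorization of length $\ell-m(c-2)$ gives $p+mk \le (\ell-m(c-2))(k-1)$, hence $p+k \le (\ell-(c-2))(k-1)$. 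With those two easy lemmas supplied, your argument is complete, self-contained, and elementary; what the paper's approach buys is brevity and a stronger imported conclusion (the cited theorem determines $\mathsf c(n)$ for every $n \in S$), while yours buys independence from a forthcoming external reference.
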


\begin{proof}
Since $S$ is generated by an arithmetic sequence, apply \cite[Theorem~3.1]{catenaryarith}.  
\end{proof}

In the remainder of this section, we exhibit two infinite families of finitely generated monoids whose sets of catenary degrees have arbitrarily large cardonality.  First, Theorem~\ref{t:largecatset} defines an infinite family of 3-generated numerical monoids with this property (Example~\ref{e:catenaryset} depicts a numerical monoid from this family).  Second, Theorem~\ref{t:fullset} exhibits an infinite family of block monoids (Definition~\ref{d:blockmonoid}) whose sets of catenary degrees have no missing values between 0 and $\mathsf c(S)$ (other than 1).  

Before proving Theorem~\ref{t:largecatset}, we recall Lemmas~\ref{l:emb2memcrit} and~\ref{l:cini}, each found in \cite{numerical}.  

\begin{lemma}\label{l:emb2memcrit}
If $S = \<n_1, n_2\>$ is a numerical monoid and $n \in \ZZ$, then $n \in S$ if and only if $n_1n_2 - n_1 - n_2 - n \notin S$.  
\end{lemma}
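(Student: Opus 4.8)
The statement to prove is Lemma~\ref{l:emb2memcrit}, the classical Sylvester–Frobenius characterization for two-generated numerical monoids: if $S = \<n_1, n_2\>$ with $\gcd(n_1,n_2)=1$, then for $n \in \ZZ$ we have $n \in S$ if and only if $(n_1 n_2 - n_1 - n_2) - n \notin S$. The Frobenius number here is $F = n_1 n_2 - n_1 - n_2$, so this is the assertion that membership in $S$ is governed by a symmetry of the gap set about $F/2$. Let me sketch a self-contained proof.

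The plan is to exploit the bijective parametrization of residues modulo $n_1$ by small multiples of $n_2$. First I would observe that since $\gcd(n_1,n_2)=1$, the $n_1$ values $0 \cdot n_2, 1 \cdot n_2, \ldots, (n_1-1)n_2$ hit each residue class modulo $n_1$ exactly once. Consequently, every integer $n$ can be written uniquely as $n = a n_2 - b n_1$ with $0 \le a \le n_1 - 1$ and $b \in \ZZ$, and the key claim is that $n \in S$ if and only if this unique $a$ satisfies $b \le 0$ (equivalently $n \ge a n_2$, so that $n = a n_2 + (-b) n_1$ is a genuine nonnegative combination). The hard part here is verifying that this normal form characterizes membership: one direction is immediate, and the other requires checking that no representation with $0 \le a \le n_1-1$ and $b > 0$ can secretly lie in $S$, which follows because any two representations of $n$ differ by an integer multiple of $(n_1, -n_2)$, so there is at most one representation with the coefficient of $n_2$ in the range $[0, n_1-1]$.

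With this normal form in hand, the proof becomes a direct computation. Writing $n = a n_2 - b n_1$ with $0 \le a \le n_1 - 1$, I would compute the complementary quantity $F - n = (n_1 n_2 - n_1 - n_2) - (a n_2 - b n_1) = (n_1 - 1 - a)n_2 - (1 - b)n_1$. Since $0 \le a \le n_1 - 1$ forces $0 \le n_1 - 1 - a \le n_1 - 1$, this expression is exactly the normal form of $F - n$, with coefficient $a' = n_1 - 1 - a$ in the valid range and $b' = 1 - b$. Now I apply the membership criterion from the normal form to both $n$ and $F - n$: we have $n \in S \iff b \le 0$, and $F - n \in S \iff b' \le 0 \iff 1 - b \le 0 \iff b \ge 1 \iff b > 0$. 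These two conditions are exactly negations of one another, which yields $n \in S \iff F - n \notin S$, as desired.

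The main obstacle, and the step deserving the most care, is establishing the normal-form membership criterion rigorously — specifically proving that $n \in S$ forces $b \le 0$ in the unique representation. I expect to argue this by noting that if $n = c_1 n_1 + c_2 n_2$ with $c_1, c_2 \ge 0$, then reducing $c_2$ modulo $n_1$ (trading blocks of $n_1$ copies of $n_2$ for $n_2$ copies of $n_1$, which only increases the $n_1$-coefficient) produces a representation with $0 \le c_2 \le n_1 - 1$ and $c_1 \ge 0$; by uniqueness of the normal form this must be the representation $n = a n_2 - b n_1$, forcing $-b = c_1 \ge 0$. One minor point to flag is the hypothesis $\gcd(n_1,n_2)=1$, which is implicit in the statement's use of the minimal generating set of a numerical monoid (Definition~\ref{d:numerical} guarantees $S$ is a numerical monoid, and a two-generated numerical monoid necessarily has coprime generators); I would state this coprimality explicitly at the outset since the entire residue-counting argument rests on it.
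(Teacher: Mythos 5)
Your proof is correct, but note that the paper itself does not prove Lemma~\ref{l:emb2memcrit} at all: it is stated as a recalled fact, cited to the numerical semigroups text \cite{numerical}, so there is no internal argument to compare against. Your self-contained proof is the standard one and is sound in every step: the unique normal form $n = an_2 - bn_1$ with $0 \le a \le n_1-1$ (which exists and is unique precisely because $\gcd(n_1,n_2)=1$, so the multiples $0\cdot n_2, \ldots, (n_1-1)n_2$ represent all residues modulo $n_1$), the membership criterion $n \in S \iff b \le 0$ (proved in the forward direction by reducing the $n_2$-coefficient of a nonnegative representation modulo $n_1$ and invoking uniqueness), and the computation showing that the normal form of $F - n$ has coefficients $(n_1-1-a,\, 1-b)$, so that the two membership conditions $b \le 0$ and $1 - b \le 0$ are exact negations of each other. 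This is essentially the Ap\'ery-set argument by which \cite{numerical} establishes the symmetry of two-generated numerical semigroups, so your route matches the standard literature proof; what your write-up adds over the paper's bare citation is self-containedness, and you are also right to flag that the coprimality hypothesis, while not stated in the lemma, is forced by Definition~\ref{d:numerical}, since $|\NN \setminus S| < \infty$ requires $\gcd(n_1,n_2) = 1$.
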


\begin{lemma}\label{l:cini}
Let $S = \<n_1, n_2, n_3\> \subset \NN$ be a numerical monoid.  Each element of $\Betti(S)$ can be written in the form 
$$c_in_i = r_{ij}n_j + r_{ik}n_k,$$
where $\{i, j, k\} = \{1, 2, 3\}$ and $c_i = \min\{c > 0 : cn_i \in \<n_j, n_k\>\}$.  
\end{lemma}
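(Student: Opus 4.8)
The plan is to run a \emph{support} argument that is special to three generators, and then to pin the resulting coefficient to its minimal value by a short connectivity analysis of $\nabla_b$. First I would note that each $c_i$ is well defined: since $\gcd(n_1,n_2,n_3)=1$, the number $\gcd(n_j,n_k)$ is coprime to $n_i$, and every sufficiently large multiple of $\gcd(n_j,n_k)$ lies in $\langle n_j,n_k\rangle$; hence infinitely many multiples $cn_i$ are of this form, and we may take $c_i$ least among them and fix an expression $c_in_i=r_{ij}n_j+r_{ik}n_k$. Now take $b\in\Betti(S)$, so $\nabla_b$ is disconnected, and choose $\aa,\bb\in\mathsf Z(b)$ in distinct components. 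Then $\gcd(\aa,\bb)=\0$, i.e.\ $\supp(\aa)$ and $\supp(\bb)$ are disjoint nonempty subsets of $\{1,2,3\}$. This is the one place the hypothesis $k=3$ is used: two disjoint nonempty subsets of a $3$-element set cannot both have two elements, so one of them --- say $\supp(\aa)=\{i\}$ --- is a singleton. Hence $\aa$ is the factorization using only $n_i$, giving $b=a_in_i$, while $\bb$ is supported on the complementary pair $\{j,k\}$; in particular $b=a_in_i\in\langle n_j,n_k\rangle$, so $a_i\ge c_i$.

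It remains to show $a_i=c_i$ (after possibly relabeling the distinguished generator). If $a_i>c_i$, I would exhibit a bridge: writing $\dd$ for the factorization consisting of $a_i-c_i$ copies of $n_i$ together with the $r_{ij}$ copies of $n_j$ and $r_{ik}$ copies of $n_k$ supplied by the relation, one has $\dd\in\mathsf Z(b)$ and $\gcd(\aa,\dd)\ne\0$, so $\dd$ lies in the component of the pure factorization $\aa$. If both $r_{ij},r_{ik}>0$, then $\dd$ uses $n_j$ and $n_k$, so it shares a generator with every factorization of $b$ that avoids $n_i$; since every factorization using $n_i$ already meets $\aa$, this makes $\nabla_b$ connected, contradicting $b\in\Betti(S)$. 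Therefore the relation must be \emph{pure}, say $r_{ik}=0$ and $c_in_i=r_{ij}n_j$; then $\dd$ covers only $n_i$ and $n_j$, and a direct check shows the only factorizations that can escape the component of $\aa$ are pure multiples of the remaining generator. As $b$ is Betti such a factorization must exist, so $b=\gamma n_k$ with $\gamma\ge c_k$.

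Finally, if $\gamma=c_k$ we are done, with distinguished generator $n_k$. Otherwise $\gamma>c_k$, and I would rule this out with a second bridge built from the relation $c_kn_k=r_{ki}n_i+r_{kj}n_j$: in each of the few possibilities for which generators occur in this relation, one writes down an explicit factorization of $b$ that shares $n_k$ with $\gamma n_k$ and shares a generator with the component of $\aa$ (for instance, when $c_kn_k=r_{kj}n_j$ the factorizations $(a_i-c_i,r_{ij},0)$ and $(0,r_{kj},\gamma-c_k)$ both use $n_j$), linking the two and again forcing $\nabla_b$ to be connected. This contradiction leaves $b=c_\ell n_\ell=r_{\ell j}n_j+r_{\ell k}n_k$ for the appropriate index $\ell$, as claimed. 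The main obstacle is precisely this last bookkeeping: tracking which generator a pure relation involves and producing the matching second bridge. Each individual case reduces to a one-line verification that two explicit factorizations share a coordinate, but assembling them correctly is where the argument must be careful; everything preceding it is the clean support-and-bridge reasoning.
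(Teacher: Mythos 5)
Your proof is correct; note, though, that there is no proof in the paper to compare it against --- the paper recalls this lemma from \cite{numerical} without proof, where it is obtained from the structure theory of minimal presentations of numerical semigroups (going back to Herzog's analysis of embedding dimension three). Your argument is a legitimate, self-contained alternative that uses nothing beyond Definition~\ref{d:bettielement}. The pigeonhole step is exactly the right use of the three-generator hypothesis: factorizations in distinct components of $\nabla_b$ have disjoint supports, and two disjoint nonempty subsets of $\{1,2,3\}$ cannot both have size two, so one component contains a pure factorization, giving $b=a_in_i\in\<n_j,n_k\>$ and hence $a_i\ge c_i$. The bridges check out as well: with coordinates ordered $(i,j,k)$, if $a_i>c_i$ then $(a_i-c_i,r_{ij},r_{ik})\in\mathsf Z_S(b)$ shares its first coordinate with the pure factorization, and if $r_{ij},r_{ik}>0$ it shares a coordinate with \emph{every} factorization of $b$, contradicting disconnectedness; if instead the relation is pure, say $c_in_i=r_{ij}n_j$, then every factorization touching $n_i$ or $n_j$ lies in the component of $(a_i,0,0)$, so disconnectedness forces $(0,0,\gamma)\in\mathsf Z_S(b)$, with $\gamma\ge c_k$ because $b=a_in_i\in\<n_i,n_j\>$; and if $\gamma>c_k$, then $(r_{ki},r_{kj},\gamma-c_k)$ shares its last coordinate with $(0,0,\gamma)$ and shares whichever of its first two coordinates is nonzero with that big component, again forcing connectivity. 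Hence $b=c_in_i$ or $b=c_kn_k$, which is the assertion, since the displayed relation then holds by the definition of $c_\ell$. Two small remarks: well-definedness of $c_i$ has a one-line proof ($n_jn_i\in\<n_j,n_k\>$, so the defining set is nonempty --- your coprimality argument is more than is needed), and the ``bookkeeping'' you worry about at the end is not actually an obstacle: the single factorization $(r_{ki},r_{kj},\gamma-c_k)$ dispatches all cases of which $r_{k\ell}$ vanish uniformly. What your route buys is elementarity and self-containedness; what the citation buys the paper is brevity, plus access to the stronger facts in \cite{numerical} (a three-generated numerical monoid has at most three Betti elements, with explicit minimal presentations) that your argument does not reproduce but that the paper does not need.
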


\begin{thm}\label{t:largecatset}
Fix $k \ge 3$, and let $S = \<n_1, n_2, n_3\> = \<2k + 1, 6k - 5, 6k - 1\>$.  
\begin{enumerate}
\item[(i)] The Betti elements of $S$ are $u = (3k - 1)n_1$, $v = (k + 1)n_2$ and $w = 2n_3$.  
\item[(ii)] We have $\{4, 3k - 1\}, \{2k - 1, \ldots, 3k - 3\} \subset \mathsf C(S)$.  
\end{enumerate}
\end{thm}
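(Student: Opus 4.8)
The plan is to prove the two parts separately, leaning on Lemmas~\ref{l:emb2memcrit} and~\ref{l:cini} for part (i) and on the corollaries of Theorem~\ref{t:mincatdegree} for part (ii).

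For part (i), I would use Lemma~\ref{l:cini}, which tells me that every Betti element of the three-generated monoid $S = \<n_1,n_2,n_3\>$ has the form $c_in_i = r_{ij}n_j + r_{ik}n_k$, where $c_i = \min\{c>0 : cn_i \in \<n_j,n_k\>\}$. So the computation reduces to determining the three constants $c_1, c_2, c_3$ and verifying that the claimed multiples $(3k-1)n_1$, $(k+1)n_2$, and $2n_3$ genuinely have two disjoint factorizations (so that $\nabla$ is disconnected, per Definition~\ref{d:bettielement}). First I would compute $c_3$: I would guess from the specific form $n_2 = 6k-5$, $n_3 = 6k-1$ that $2n_3 = 12k-2$ admits an alternate factorization in $\<n_1,n_2\>$, and check the relation explicitly against $2k+1$ and $6k-5$. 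For $c_2$, I would verify $(k+1)(6k-5) = (k+1)n_2$ lies in $\<n_1,n_3\>$ and that no smaller positive multiple does; the nonmembership of smaller multiples is where Lemma~\ref{l:emb2memcrit} (the two-generator membership criterion using the Frobenius-type value $n_an_b - n_a - n_b$) becomes the right tool. Similarly for $c_1$ with $(3k-1)n_1$. The arithmetic here is routine but must be done carefully since the three $c_i$ need to be pinned down exactly, and I would double-check that these three are the \emph{only} Betti elements (Lemma~\ref{l:cini} bounds their number by three, so I need each $c_in_i$ to actually be a Betti element and no coincidences to collapse them).

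For part (ii), the two claimed subsets of $\mathsf C(S)$ are handled by opposite bounds. The finite set $\{4, 3k-1\} \subset \mathsf C(S)$ I would establish by directly computing the catenary degrees of the Betti elements $u,v,w$ from part (i): by Corollary~\ref{c:mincatdegree} and Theorem~\ref{t:maxcatdegree}, the extreme nonzero catenary degrees of $S$ occur at Betti elements, so I would compute $\mathsf c(u)$, $\mathsf c(v)$, $\mathsf c(w)$ from the explicit factorizations found above (each Betti element has essentially two factorizations, so its catenary degree is just the distance between them, by Definition~\ref{d:catenarydegree}) and read off that two of these values are $4$ and $3k-1$. The interval $\{2k-1, \ldots, 3k-3\} \subset \mathsf C(S)$ is the real work: for each integer $c$ in this range I must exhibit an element $m \in S$ with $\mathsf c(m) = c$. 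The natural strategy is to consider elements of the form $m = tn_1$ (or suitable sums) whose factorization set interpolates between the extremes, and to compute $\mathsf c(m)$ by the ``connectivity'' method illustrated in Example~\ref{e:catenarydegree}: build the graph on $\mathsf Z(m)$, identify the distance at which it first becomes connected, and show removing the largest such edge disconnects it. I expect I can parametrize a family $m_c$ indexed by $c$ in the interval and compute the distances between the few relevant factorizations as explicit linear functions of $k$.

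The main obstacle will be the interval inclusion $\{2k-1,\ldots,3k-3\} \subset \mathsf C(S)$, since this requires producing, for each of roughly $k$ target values, an element realizing exactly that catenary degree and then pinning the catenary degree down precisely rather than merely bounding it. Establishing a lower bound on $\mathsf c(m_c)$ is the delicate half: an upper bound follows from exhibiting a single connecting chain, but to show $\mathsf c(m_c) \ge c$ I would invoke the disconnection argument, showing that no chain using only steps of distance below $c$ can join two particular factorizations of $m_c$; here Theorem~\ref{t:mincatdegree} and Corollary~\ref{c:minmaxcatdegree} provide the conceptual backbone, bounding $\mathsf c(m_c)$ in terms of the Betti elements dividing $m_c$, but the exact value still needs the explicit graph analysis. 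The rest of the proof is careful but routine arithmetic in the specific numerical monoid.
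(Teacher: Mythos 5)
Your plan for part (i) and for the containment $\{4,3k-1\} \subset \mathsf C(S)$ matches the paper's proof: Lemma~\ref{l:cini} reduces (i) to pinning down $c_1,c_2,c_3$, Lemma~\ref{l:emb2memcrit} rules out membership of the smaller multiples, and the Betti elements $u$ and $w$, each having exactly two disjoint factorizations, give $\mathsf c(u)=3k-1$ and $\mathsf c(w)=4$ directly. The genuine gap is the interval inclusion $\{2k-1,\ldots,3k-3\}\subset\mathsf C(S)$, which you correctly single out as the main obstacle but do not resolve: ``I expect I can parametrize a family $m_c$'' is precisely the content of the theorem, and your one concrete candidate, $m=tn_1$, provably fails. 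Indeed, if $(x,y,z)\in\mathsf Z(tn_1)$ is distinct from $(t,0,0)$, then $(t-x)n_1=yn_2+zn_3\in\<n_2,n_3\>$ with $t-x>0$, so $t-x\ge 3k-1$ by the minimality of $c_1$ established in part (i); since $\gcd((t,0,0),(x,y,z))=(x,0,0)$, this gives $d((t,0,0),(x,y,z))\ge 3k-1$. Hence every multiple of $n_1$ with at least two factorizations has catenary degree at least $3k-1$, so no element of your proposed family realizes any value in the target interval. Note also that Theorem~\ref{t:mincatdegree} and Corollary~\ref{c:minmaxcatdegree} cannot certify any value in $\{2k-1,\ldots,3k-4\}$: those values occur at no Betti element (the Betti catenary degrees are $4$, $3k-3$, and $3k-1$), and the Betti-divisor bounds only sandwich $\mathsf c(m)$ in a range; they never pin it to a value not attained at a Betti element, so the ``explicit graph analysis'' you defer is unavoidable and is where the theorem lives.

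What is missing is the paper's explicit family, built from multiples of $n_2$ rather than $n_1$: for $0\le j\le k-2$ set $s_j=(k+1+j)n_2=v+jn_2$. Using Lemma~\ref{l:emb2memcrit} and induction on $j$, one shows $\mathsf Z(s_j)$ has exactly $j+2$ elements, namely $\aa_0=(0,\,k+1+j,\,0)$ and $\aa_i=(3k-1-3i,\,j+1-i,\,2i-1)$ for $1\le i\le j+1$. Consecutive factorizations $\aa_i,\aa_{i+1}$ with $i\ge 1$ are at distance $4$, while $d(\aa_0,\aa_i)=3k-2-i$; thus any chain leaving $\aa_0$ requires a step of weight at least $\min_{1\le i\le j+1}(3k-2-i)=3k-3-j$, and a chain achieving this bound exists, so $\mathsf c(s_j)=3k-3-j$ exactly. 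Letting $j$ run over $0,\ldots,k-2$ sweeps out $\{2k-1,\ldots,3k-3\}$. Without this family (or an equally explicit substitute), part (ii) remains unproved.
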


\begin{proof}
Notice the generators of $S$ are all pairwise coprime.  Fix $a < 3k - 1$, and write $a = 3b + c$ for $0 \le c < 3$.  We have 
$$\begin{array}{rcl}
n_2n_3 - n_2 - n_3 - an_1
&=& (6k - 5)(6k - 1) - (6k - 5) - (6k - 1) - a(2k + 1) \\
&=& 36k^2 - (2a + 48)k + (11 - a) \\
&=& (2ck + b - 1)n_2 + (6k - 2ck - 2b + c - 6)n_3,
\end{array}$$
so by Lemma~\ref{l:emb2memcrit}, $an_1 \notin \<n_2, n_3\>$.  This means $u = (3k - 1)n_1 = kn_2 + n_3$ is a Betti element of $S$ by Lemma~\ref{l:cini}.  Similarly, for each $a < k + 1$, we have
$$\begin{array}{rcl}
n_1n_3 - n_1 - n_3 - an_2
&=& 12k^2 - (6a + 4)k + (5a - 1) \\
&=& (3a - 1)n_1 + (2k - 2a)n_3,
\end{array}$$
so $v = (k + 1)n_2 = (3k - 4)n_1 + n_3$ is also a Betti element of $S$ by Lemmas~\ref{l:emb2memcrit} and~\ref{l:cini}.  Applying Lemmas~\ref{l:emb2memcrit} and~\ref{l:cini} once more, we conclude from 
$$n_1n_2 - n_1 - n_2 - n_3 = 12k^2 - 18k = (3k - 5)n_1 + (k - 1)n_2,$$
that $w = 2n_3 = 3n_1 + n_2$ is the last Betti element of $S$.  This proves~(i).  

It is easy to check that $u$, $v$, and $w$ each have only 2 distinct factorizations, and the first containment of~(ii) follows from computing $\mathsf c(u) = 3k - 1$ and $\mathsf c(w) = 4$.  For $0 \le j \le k - 2$, let $s_j = 6k^2 + (6j + 1)k - 5j - 5$.  We claim each $s_j$ has exactly $j + 2$ distinct factorizations: $s_j = (k + 1 + j)n_2$, which we shall denote by $\aa_0 \in \mathsf Z(s_j)$ and 
$$s_j = (3k - 1 - 3i)n_1 + (j + 1 - i)n_2 + (2i - 1)n_3$$
for $1 \le i \le j + 1$, which we shall denote by $\aa_i \in \mathsf Z(s_j)$.  Indeed, this has already been shown for $s_0 = v$ above, and induction on $j$ implies each $s_j = s_{j-1} + n_2$ has exactly $j + 1$ factorizations with at least one copy of $n_2$.  Since $3k - 4 - 3j < n_3$ and $2j + 1 < n_1$, the only factorization of $s_j$ in $\<n_1, n_3\>$ is $\aa_0$, from which the claim follows.  

Lastly, if $\mathsf c(s_j) = N$, there exists an $N$-chain eminating from $\aa_0$, so 
$$N \ge \min\{d(\aa_0, \aa_i) : 1 \le i \le j + 1\} = \min\{3k - i - 2 : 1 \le i \le j + 1\} = 3k - 3 - j.$$
Since $d(\aa_i, \aa_{i+1}) = 4$ for $1 \le i \le j$, we have $\mathsf c(s_j) = 3k - 3 - j$.  
\end{proof}

\begin{remark}\label{r:2gen}
Theorem~\ref{t:largecatset} cannot be improved to only require 2 minimal generators.  Indeed, suppose $S \subset \NN^k$ has two atoms.  If $k \ge 2$ and the generators of $S$ are linearly independent, then $S$ is factorial, so $\mathsf C(S) = \{0\}$.  Otherwise, $S$ is isomorphic to a numerical monoid $\<n_1, n_2\> \subset \NN$, and \cite[Remark~2.2]{catenaryarith} implies $\mathsf C(S) = \{0, n_2\}$.  
\end{remark}

The final result of this section concerns monoids of zero-sum sequences over finite groups (Definition~\ref{d:blockmonoid}).  Here, we only introduce what is needed to prove Theorem~\ref{t:fullset}; the unfamiliar reader should consult~\cite{nonuniq} for a more thorough introduction.  

\begin{defn}\label{d:blockmonoid}
Fix a finite Abelian group $G$ with $|G| \ge 3$, written additively, and let $\mathcal F(G)$ denote the (multiplicatively written) free abelian monoid with basis $G$.  An element $A = g_1 \cdot \ldots \cdot g_{\ell} \in \mathcal F(G)$ (called a \emph{sequence} over $G$) is said to be \emph{zero-sum} if $g_1 + \ldots + g_\ell = 0$ in $G$.  The set $\mathcal B(G) \subset \mathcal F(G)$ of zero-sum sequences over $G$ is a submonoid of $\mathcal F(G)$, called the \emph{block monoid of $G$}.   
\end{defn}

\begin{remark}\label{r:blockmonoid}
By \cite[Proposition 2.5.6]{nonuniq}, the block monoid $\mathcal B(G)$ of a finite group $G$ is a Krull monoid with class group isomorphic to $G$ and every class contains a prime divisor.  The catenary degree of block monoids was recently studied in the context of Krull monoids in \cite{krullcata,krullcatb}.  
\end{remark}

\begin{thm}\label{t:fullset}
Let $S$ be the block monoid of a cyclic group $G$ of order $|G| = n \ge 4$, and fix an element $g \in G$ with order $|g| = n$.  Then $\mathsf C(S) = \{0, 2, 3, \ldots, n\}$.
\end{thm}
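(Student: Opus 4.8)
The plan is to determine $\mathsf{C}(S)$ for the block monoid $\mathcal{B}(G)$ with $G$ cyclic of order $n \geq 4$ by computing the catenary degrees of its Betti elements and invoking the results of Section~\ref{s:mincatdegree} to control the full set. By Corollary~\ref{c:minmaxcatdegree}, every nonzero catenary degree lies between $\min\{\mathsf{c}(m) : m \in \Betti(S)\}$ and $\max\{\mathsf{c}(m) : m \in \Betti(S)\}$, so the first task is to identify the atoms and Betti elements of $\mathcal{B}(G)$ and pin down the endpoints. For a cyclic group $G = \langle g \rangle$ of order $n$, the atoms of the block monoid are well understood (the minimal zero-sum sequences), and in particular the maximal catenary degree of $\mathcal{B}(G)$ is known to equal $n$ (this is the Davenport-constant-type bound; for cyclic groups $\mathsf{c}(S) = \mathsf{D}(G) = n$). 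So I expect $\max \mathsf{C}(S) = n$ and the lower endpoint to be $2$, which already gives $\mathsf{C}(S) \subseteq \{0, 2, 3, \ldots, n\}$ once I confirm these bounds.

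The substance of the proof is therefore the reverse inclusion: exhibiting, for each $c$ with $2 \le c \le n$, an element $m \in S$ with $\mathsf{c}(m) = c$. First I would write elements explicitly in terms of the fixed generator $g$ of order $n$. Natural building blocks are the atoms $g^n$ (i.e. the sequence consisting of $n$ copies of $g$), the atom $0$, and the atoms $g^j \cdot (n-j)g$ for suitable $j$ (sequences that are zero-sum by construction). For the element $A = g^n \cdot ((n-1)g)^{?}$-type products, I would build a family of zero-sum sequences whose factorization structure has exactly two "far apart" factorizations realizing a prescribed distance. The cleanest candidates are sequences of the form $A_c = g^c \cdot ((n-1)g)^c$ or similar, which factor either as $c$ copies of the atom $g \cdot (n-1)g$ or, by regrouping, as a factorization using the long atoms $g^n$ and $((n-1)g)^n$; tuning $c$ should make the distance between these two factorizations equal to $c$, yielding $\mathsf{c}(A_c) = c$.

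Concretely, I would verify that for each target value $c$ in the range there is a zero-sum sequence whose factorization graph, after deleting all edges of weight $< c$, becomes disconnected, while an edge of weight exactly $c$ reconnects it — exactly the computation illustrated in Example~\ref{e:catenarydegree}. This amounts to choosing the multiplicities so that the two extreme factorizations share no common atom (forcing $\gcd = 0$ and hence distance equal to the larger length) and so that that length is precisely $c$, while checking that no intermediate factorization permits a shorter hop. I would handle the endpoint $c = n$ using the atom $g^n$ together with $0^{\,n}$ or the factorization of $(g \cdot (n-1)g)^{\cdots}$ realizing the Davenport bound, and $c=2$ using a short sequence with two factorizations differing by a single swap.

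The main obstacle I anticipate is the careful combinatorial verification that the constructed elements have catenary degree \emph{exactly} $c$ rather than merely \emph{at least} $c$: bounding $\mathsf{c}(m)$ from below is usually easy (produce two factorizations at distance $c$ with disjoint support, so no $(c-1)$-chain connects them), but the upper bound $\mathsf{c}(m) \le c$ requires showing that \emph{every} pair of factorizations of $m$ can be connected by a $c$-chain, which means understanding the entire factorization set $\mathsf{Z}(m)$ and all pairwise distances. Rather than analyze each $m$ by hand, I would try to keep the factorization structure as simple as possible — ideally a "path-like" graph where consecutive factorizations are at distance at most $c$ — so that the upper bound follows transparently. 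If a direct construction proves delicate, the fallback is to lean on the structural catenary-degree results for Krull monoids cited in Remark~\ref{r:blockmonoid} (\cite{krullcata,krullcatb}) to supply the intermediate values, together with Corollary~\ref{c:minmaxcatdegree} to rule out anything outside $\{0, 2, 3, \ldots, n\}$.
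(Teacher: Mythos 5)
Your high-level skeleton matches the paper's: cite the known value $\mathsf c(S) = n$ for cyclic groups (the paper uses \cite[Theorem 6.4.7]{nonuniq}) to get $\mathsf C(S) \subseteq \{0,2,\ldots,n\}$, then realize each $c \in [2,n]$ by an explicit element whose factorization set is so small that its catenary degree can be read off directly. However, your concrete construction fails, and this is a genuine gap. Writing $-g = (n-1)g$, the only atoms of $S$ supported on $\{g, -g\}$ are $U = g^n$, $U' = (-g)^n$, and $V = g(-g)$. Hence for $c < n$ your element $A_c = g^c(-g)^c$ has the \emph{unique} factorization $V^c$, so $\mathsf c(A_c) = 0$, not $c$; and for $c \ge n$, any two factorizations of $g^a(-g)^b$ differ by repeatedly trading $V^n$ for $UU'$, a move of distance $\max\{n,2\} = n$. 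So elements supported on $\{g,-g\}$ only ever have catenary degree $0$ or $n$: no tuning of multiplicities on this support produces the intermediate values $3, \ldots, n-1$, which are exactly the hard part of the theorem.

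The missing idea is to bring in a \emph{third} group element. The paper sets $h = (j-1)g$ for $j \in [3,n]$ and takes $A = (-g)^{j-1} g^n h$. The only atoms dividing $A$ are $U = g^n$, $V = g(-g)$, $W = (-g)^{j-1}h$, and $X = g^{n-j+1}h$, and one checks $\mathsf Z(A) = \{UW,\ V^{j-1}X\}$: exactly two factorizations, with disjoint support, hence at distance $\max\{2, j\} = j$, giving $\mathsf c(A) = j$ on the nose (your anticipated difficulty of proving the upper bound $\mathsf c(A) \le j$ evaporates because there are only two factorizations). Similarly, $c = 2$ is realized by $A = (2g)^2 g^{2n-4}$, whose only factorizations are $UW$ and $V^2$ with $W = (2g)^2 g^{n-4}$, $V = (2g)g^{n-2}$. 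Your fallback of invoking \cite{krullcata,krullcatb} does not close the gap either: those papers compute the (maximum) catenary degree $\mathsf c(S)$ of Krull monoids, not the full set $\mathsf C(S)$, so they cannot be cited to supply the intermediate values.
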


\begin{proof}
By \cite[Theorem 6.4.7]{nonuniq}, we have $\mathsf c(S) = n$ and hence $\mathsf C(S) \subset \{0, 2,3, \ldots, n\}$.  Hence, it remains to show that the interval $[2, n] \subset \mathsf C (S)$.  First, consider the element 
$$A = (2g)^2 g^{2n-4} \in S.$$
The only minimal (that is, irreducible) zero-sum sequences dividing $A$ are given by $U = g^n$, $V = (2g)g^{n-2}$, and $W = (2g)^2g^{n-4}$.  This yields $\mathsf Z(A) = \{UW, V^2\}$, from which we conclude that $\mathsf c(A) = 2$.  

Now, fix $j \in [3,n]$ and let $h = (j-1)g$.  Consider the zero-sum sequence 
$$A = (-g)^{j-1}g^n h \in S.$$
This time, there are precisely four minimal zero-sum sequences dividing $A$, namely $U = g^n$, $V = g(-g)$, $W = (-g)^{j-1}h$, and $X = g^{n-j+1}h$.  From this, we conclude that $\mathsf Z(A) = \{UW, V^{j-1} X\}$, which means in particular that $\mathsf c(A) = j$.
\end{proof}

\begin{remark}\label{r:fullset}
In Theorem~\ref{t:fullset}, the block monoid with set of catenary degrees $\{0, 2, 3, \ldots, n\}$ has a number of atoms that is exponential in $n$ \cite{vadimsumseq}.  While it is unclear whether this result can be strengthened to use monoids with a bounded number of atoms, such an improvement would require monoids with at least 4 atoms.  Indeed, suppose $S \subset \NN^k$ has only 3 atoms, and that $k$ is minimal (that is, $S$ spans $\RR^k$).  If~$k = 3$, then $S$ is factorial.  If~$k = 2$, then $S$ has a unique Betti element~\cite{uniquebetti} and thus has only one nonzero catenary degree by Corollary~\ref{c:singlecatdegree}.  Lastly, if $S$ is a numerical monoid, then requiring $2 \in \mathsf C(S)$ forces the atoms of $S$ to contain an arithmetic sequence of length~3.  Since $S$ has only 3 atoms, \cite[Theorem~3.1]{catenaryarith} implies $\mathsf C(S) = \{0, 2, \mathsf c(S)\}$.  
\end{remark}





\end{document}